\newcommand{\R}{{\mathbb R}}
\def\C{{\mathbb C}}
\newcommand{\N}{{\mathbb N}}
\newcommand{\Z}{{\mathbb Z}}
\newcommand{\I}{\mathrm{i}}
\newcommand{\rmi}{\mathrm{i}}
\renewcommand{\Re}{\operatorname{Re}}
\newcommand{\id}{{\ensuremath{\mathds{1}}}}
\newcommand{\leqnomode}{\tagsleft@true}
\newcommand{\reqnomode}{\tagsleft@false}
 \newtheorem{theorem}{Theorem}[section]
 \theoremstyle{definition}
 \newtheorem{exa}[theorem]{Example}
 \newtheorem{exercise}[theorem]{Exercise}
\title[MPS and Geometry]{Computation of Eigenvalues, Spectral Zeta Functions and Zeta-Determinants on Hyperbolic Surfaces}
\author[A. Strohmaier]{Alexander Strohmaier}
\address{Department of Mathematical Sciences,  Loughborough University,  Loughborough, Leicestershire, LE11 3TU, UK} 
\email{a.strohmaier@lboro.ac.uk}
\date{\today}
\keywords{}
\begin{document}

\begin{abstract}
 These are lecture notes from a series of three lectures given at the summer school ``Geometric and Computational Spectral Theory"  in Montreal in June 2015. The aim of the lecture was to explain the mathematical theory behind computations of eigenvalues and spectral determinants in geometrically non-trivial contexts.
  \end{abstract}
\maketitle

%%%%%%%%%%%%%%%%%%%%%%%%%%%%%%%%%%%%%%%%%%%%%%%%%%%%%%%%%%%%

%%%%%%%%%%%%%%%%%%%%%%%%%%%%%%%%%%%%%%%%%%%%%%%%%%%%%%%%%%%%%%%%%%%%%%%%%%%%%%%%%%%%%%%%%
\setcounter{secnumdepth}{1}
\setcounter{tocdepth}{1}
\tableofcontents

\section{The Method of Particular Solutions}

The method of particular solutions is a method to find eigenvalues for domains with Dirichlet boundary conditions. It goes back to an idea
by Fox-Henrici-Moler from 1967 (\cite{fox1967approximations}) and was revived by Betcke and Trefethen \cite{betcke2005reviving}
essentially by modifying it to make it numerically stable. 

\subsection{A high accuracy eigenvalue solver in one dimension}

In order to illustrate the method, let us look at it in the simple case of a differential operator on an interval.
Let $[-L,L] \subset \R$ be a compact interval. As usual,  let $-\Delta = -\frac{\partial^2}{\partial x^2}$ be the Laplace operator and assume that $V \in C^\infty([-L,L])$
is a potential. Then the operator $-\Delta +V$ subject to Dirichlet boundary conditions has discrete spectrum. This means there exists a discrete set of values 
$(\lambda_i)_{i \in \mathbb{N}}$ such that the equation
 $$
 (-\Delta +V - \lambda) u =0, \quad  u(-L)=0, \quad u(L)=0.
$$
admits a non-trivial solution $u = \phi_i$.
The eigenvalues $(\lambda_i)$ can be computed as follows.\\

\noindent
 {\bf Step 1.} Solve the initial value problem.\\
For each $\lambda \in \C$ we can solve the initial value problem
$$
 (-\Delta +V - \lambda)u_\lambda =0, \quad  u_\lambda(-L)=0, \quad \frac{d}{dx }u_\lambda(-L)=1.
$$
This can be done either analytically or numerically depending on the type of differential equation.
Then $u_\lambda(+L)$ as a function of $\lambda$ is entire in $\lambda$. The function does not vanish identically
as for example can be shown using integration by parts at $\lambda=\rmi$. The eigenvalues are precisely the zeros of this function.
This provides a direct proof that the eigenvalues form a discrete set.

\noindent
 {\bf Step 2.} Find the zeros of the function $\lambda \mapsto u_\lambda(+L)$ for example using the secant method or Newton's method. This will converge rather fast because
 the function is analytic.\\
 
This algorithm is implemented in the following Mathematica script in the case $$V(x)=5(1-x^2)$$ on the interval $[-1,1]$.

\includepdf[pages={1,2,3,4}]{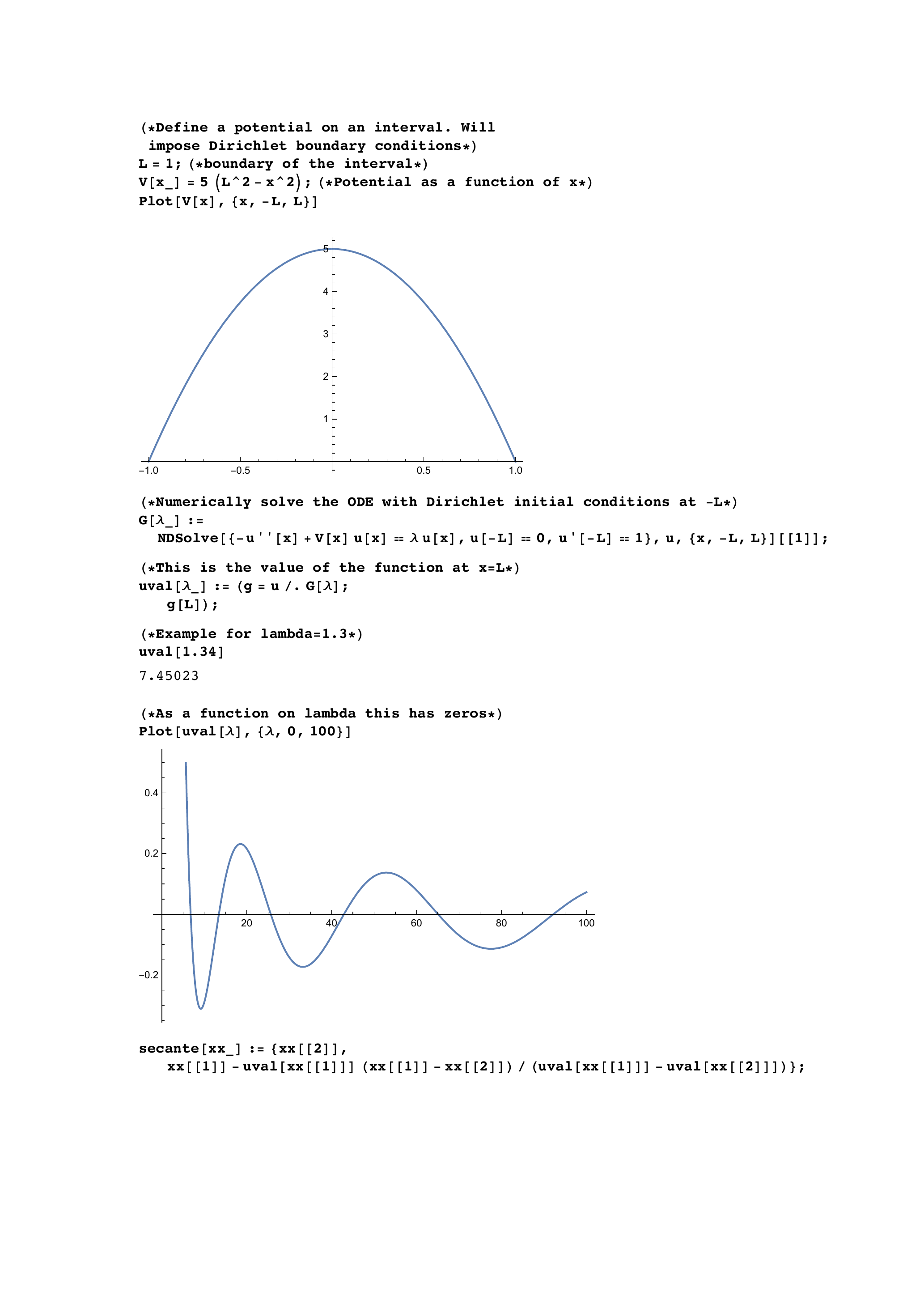}
 
\subsection{Dirichlet eigenvalues for domains in $\R^n$}

The following is a classical result by Fox-Henrici-Moler from 1967 (\cite{fox1967approximations}). Suppose that $\Omega \subset \R^n$ is an open bounded domain in $\R^n$.
Then, the Laplace operator $-\Delta$ with Dirichlet boundary conditions can be defined as the self-adjoint operator obtained from the quadratic form
$$
 q(f,f) = \langle \nabla f, \nabla f \rangle_{L^2(\Omega)}
$$
with form domain $H^1_0(\Omega)$. Since the space $H^1_0(\Omega)$, by Rellich's theorem, is compactly embedded in $L^2(\Omega)$ the spectrum of this operator
is purely discrete and has $\infty$ as its only possible accumulation point. Hence, there exists an orthonormal basis
$(u_j)_{j \in \N}$ in $L^2(\Omega)$ consisting of eigenfunctions with eigenvalues $\lambda_j$, which we assume to be ordered, i.e.
\begin{gather}\nonumber
-\Delta u_j = \lambda_j u_j,\\ 
\| u_j \|_{L^2(\Omega)}=1,\\ \nonumber
u_j |_{\partial \Omega}=0,\\
 0 < \lambda_1 \leq \lambda_2 \leq \cdots \nonumber
\end{gather}
Suppose that $u \in C^\infty(\overline \Omega)$ is a smooth function on the closure $\overline \Omega$ of $\Omega$ satisfying
$$
 -\Delta u = \lambda u,
$$
and assume that
\begin{gather}
 \| u \|_{L^2(\Omega)}=1,\nonumber\\
 \epsilon = \sqrt{|\Omega|}\cdot \| u|_{\partial \Omega } \|_{\infty} <1.
\end{gather}

Then the theorem of Fox-Henrici-Moler states that there exists an eigenvalue $\lambda_j$ of the Dirichlet Laplace operator $-\Delta_D$ such that
\begin{gather}
 \frac{| \lambda - \lambda_j|}{\lambda} \leq \frac{\sqrt{2} \epsilon + \epsilon^2}{1-\epsilon^2}.
\end{gather}

This estimate can be used to obtain eigenvalue inclusions as follows. Choose a suitable set of functions $(\phi_j)_{j=1,\ldots,N}$ satisfying
$$
  -\Delta \phi_j  = \lambda \phi_j.
$$
Such functions could  for example be chosen to be plane waves $\phi_j = \exp(\I \mathbf{k}_j \cdot x)$, where $\mathbf{k}_j \in \R^n$ are vectors such that $\| \mathbf{k}_j \| = \lambda$.
Then one tries to find a linear combination $u=\sum_{j=1}^N v_j \phi_j, \; v_j \in \mathbb{R}$ such that $\| u |_{\partial \Omega}\|_\infty$ is very small. If one approximates the boundary by a finite set of points this reduces to 
a linear algebra problem.
This strategy was quite successful to find low lying eigenvalues for domains in $\R^2$, but was thought to be unstable for higher eigenvalues and for greater precision
when more functions were used. The reason for this unstable behavior is that with too many functions being used, i.e. $N$ being very large, there might be more
linear combinations of the functions $\phi_j$ whose $L^2$-norm is rather small, despite the fact that the $\ell^2$-norm of the coefficient vector $a_j$ is not small.

Betcke and Trefethen (\cite{betcke2005reviving}) managed to stabilize the method of particular solutions by preventing the function $u$ from becoming small in the interior. A simple way to implement
a stable method of particular solutions is as follows. 

Let $(\phi_k)_{k=1,\ldots,N}$ be functions as before.
Let $(x_j)_{j=1,\ldots,M}$ be a family of points on the boundary $\partial \Omega$, and let $(y_j)_{j=1,\ldots,Q}$ be a sufficiently large family of internal points in $\Omega$,
say randomly distributed.

We are looking for a linear combination $u = \sum_{k=1}^N v_k \phi_k$ that is small at the boundary, but that does not vanish in the interior of $\Omega$. Thus, roughly, we are
seeking to minimize $\sum_{j=1}^M | u(x_j) |^2$ whilst keeping $\sum_{j=1}^Q | u(y_j) |^2$ constant. Using the matrices
\begin{gather*}
 A=(a_{ij}), \quad a_{ij}= \phi_j(x_i),\\
 B=(b_{ij}), \quad b_{ij}= \phi_j(y_i),
\end{gather*}
we are thus looking for a vector $v =(v_1,\ldots,v_N) \in \C^N$ such that the quotient $\frac{\| A v \|}{\| B v \|}$ is minimal. Minimizing this quotient is the same as finding the smallest
generalized singular vector of the pair $(A,B)$. The minimal quotient is the smallest singular value of the pair $(A,B)$. 
This value can then be plotted as a function of $\lambda$.\\

The following simple Mathematica code implements this for in the interior of an ellipse.
This is done for the interior 
$$
 \Omega = \{ (x,y) \in \R^2 \mid \frac{1}{4} x^2 + y^2 < 1 \}.
$$
The code illustrates that the first Dirichlet eigenvalues can be computed with a remarkable precision.

\includepdf[pages={1,2}]{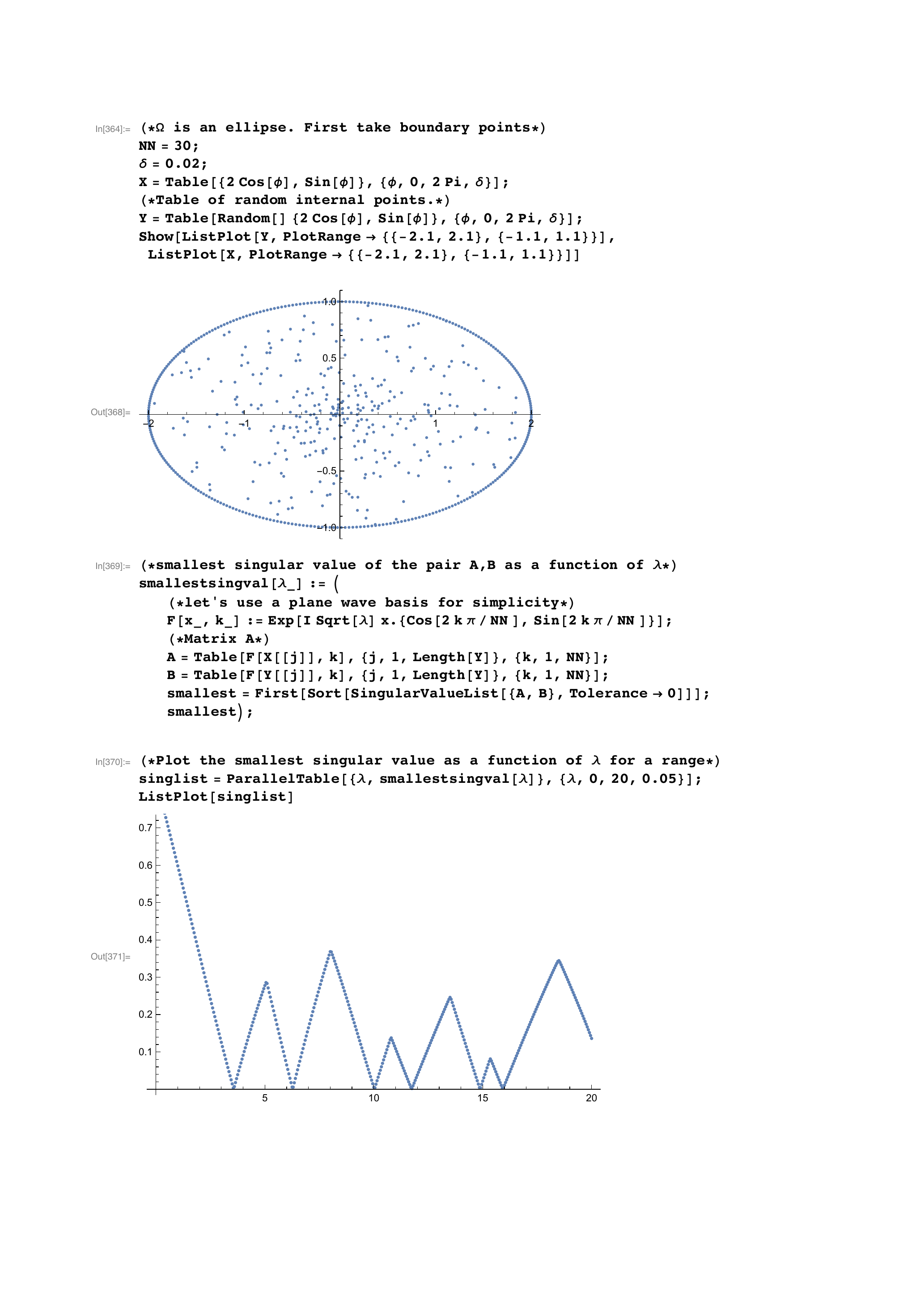}

Once the numerical part is successful and we have a singular vector for the smallest singular value, we are left with two analytical challenges to establish an eigenvalue
inclusion in an interval $[\lambda-\epsilon,\lambda+\epsilon]$:

\begin{enumerate}
 \item Prove that the function $u$ is small on the boundary, i.e. estimate $ \| u |_{\partial \Omega}\|_\infty$.
 \item Prove that the $L^2$-norm of the function $u$ is not too small, i.e. estimate $\| u \|_{L^2(\Omega)}$.
\end{enumerate}

The first point is easy to deal with, for example by Taylor expanding the function $u$ at the boundary (in case the boundary is smooth) and using Taylor's remainder estimate.
The second point is more tricky. Since however even any bad bound from below will do the job, numerical integration with a remainder term can be used to check directly
that the $L^2$-norm is not very small. 

Once a list of eigenvalues is established there is another analytical challenge.
\begin{enumerate}
 \item[(3)] Prove that the method does not miss any eigenvalue if the step-size is chosen small enough.
\end{enumerate}
This point is the most difficult one. It requires a proof that the set of functions is sufficiently large in a quantified sense. It is often easier to first compute a list of eigenvalues and then check afterwards, using other methods, that this list is complete.

The method of particular solutions for domains has been further improved beyond what is presented here (see for example \cite{barnett2011boundary} and references therein) and a software package 
MPS-pack (\cite{barnettmpspack}) exists that makes it possible to compute eigenvalues with very high accuracy for domains in $\R^2$.

\section{The Method of Particular Solutions in a Geometric Context}

Instead of the Dirichlet problem for a domain, we will now consider the problem of finding the spectral resolution of the Laplace operator on a closed Riemannian manifold $M$
with metric $g$ and dimension $n$. Then the metric Laplace operator $-\Delta: C^\infty(M) \to C^\infty(M)$ is given in local coordinates by
\begin{gather}
 -\Delta = - \sum_{i,k=1}^n \frac{1}{\sqrt{|g|}} \frac{\partial}{\partial x^i} \sqrt{|g|} g^{ik}  \frac{\partial}{\partial x^k}.
\end{gather}
The space $C^\infty(M)$ is equipped with the metric inner product
$$
 \langle f_1, f_2 \rangle = \int_M f_1(x) \overline{f_2(x)} \sqrt{|g|} dx.
$$
The completion of $C^\infty(M)$ is the space $L^2(M)$.
The Laplace operator is essentially self-adjoint as an unbounded operator in $L^2(M)$ and the domain of the closure is equal to the second Sobolev space $H^2(M)$.
By Rellich's theorem this space is compactly embedded in $L^2(M)$ and therefore the Laplace operator has compact resolvent, i.e. its spectrum is purely discrete with $\infty$
as the only possible accumulation point. Moreover, $-\Delta$ is a non-negative operator, and the zero eigenspace consists of locally constant functions.
Because of elliptic regularity the eigenfunctions are smooth on $M$. Summarizing, we therefore know that there exists an orthonormal basis $(u_j)$ in $L^2(M)$
such that
\begin{gather}
-\Delta u_j = \lambda_j u_j,\nonumber\\
u_j \in C^\infty(M),\\
 0 \leq \lambda_1 \leq \lambda_2 \leq \cdots \nonumber
\end{gather}

We will be applying the idea of the method of particular solutions to manifolds (see \cite{strohmaier2013algorithm}). We start by describing this in a very general setting.
Suppose that $M$ is a compact Riemannian manifold and suppose this manifold is glued from a finite number of closed subsets $M_j$ along their boundaries so that
$$
 M = \cup_{j=1}^q M_j.
$$
We assume here that $M_j$ are manifolds that have a piecewise smooth Lipschitz boundary.

\begin{exa}
 The $n$-torus $T^n$ can be obtained from the cube $[0,1]^n$ by identifying opposite boundary components. In this case we have only one component $M_1$ and its boundary
 $\partial M_1$. 
\end{exa}

\begin{exa}
 A surface of genus $2$ can be glued from two pair of pants, or alternatively, from $4$ hexagons. This will be discussed in detail in Section \ref{hypsuf}.
\end{exa}

If $f \in C^\infty(M)$ is a function on $M$ then we can of course restrict this function to each of the components $M_j$ and we thus obtain a natural map
\begin{gather}
 R: C^\infty(M) \to C^\infty(\sqcup_j M_j).
\end{gather}
Since the interior of $M_j$ is naturally a subset in $M$, and its boundary has zero measure,
we can also understand functions in $C^1(\sqcup_j M_j)$ as (equivalence classes of) functions on $M$ that have jump type discontinuities along the boundaries
of $M_j$. In this way we obtain a map
\begin{gather}
  E: C^\infty(\sqcup_j M_j) \to L^\infty(M).
\end{gather}
By construction, we have $E \circ R = \id$.
Given a function in $C^\infty(\sqcup_j M_j)$, we can also measure its jump behavior as follows.
After gluing the boundaries $\sqcup_j \partial M_j$ form a piecewise smooth Lipschitz hypersurface $\Sigma$ in $M$. Suppose $x$ is a point in $\Sigma$.
Then $x$ arises from gluing points in  $\sqcup_j \partial M_j$. 
We will assume that there are precisely two such points $x_+ \in \partial M_{j_1}$ and $x_- \in \partial M_{j_2}$ that form the point $x$ after gluing.
We will also assume that the normal outward derivatives $\partial_{n(x_+)}$ and $\partial_{n(x_-)}$ are well defined at these points. 
These two assumption are  satisfied on a set of full measure in $\Sigma$. Note that there is 
freedom in the choice of $x_+$ and $x_-$ for a given $x$. We assume here that such a choice has been made and that this choice is piecewise continuous.
Given $f \in C^\infty(\sqcup_j M_j)$
we define
\begin{gather}
 Df(x) =  f(x_+) - f(x_-),\nonumber\\
 D_n f(x) = \partial_{n(x_+)} f +  \partial_{n(x_-)} f.
\end{gather}
These functions are then functions in $L^\infty(\Sigma)$. $Df$ measures the extent to which $f$ fails to be continuous and $D_n f$ measures the extent to which
$f$ fails to be differentiable.

The significance of the functions $Df$ and $D_n f$ is in the fact that they naturally appear in Green's identity as follows. Suppose that $(f_j)$ is a collection
of smooth functions on $M_j$ and $f$ is the assembled function $f=E (f_j)$. Then, by Green's formula, for any test function $g \in C^\infty_0(M)$ we have
\begin{gather}
 \int_{M} f(x) (\Delta g)(x) dx =
  \sum_{j} \left(  \int_{M_j} (\Delta f_j) (x)  g(x) dx  \right)\nonumber \\ + \sum_j \left(-\int_{\partial M_j} (\partial_n f)(x) g(x) dx +  \int_{\partial M_j} f(x) (\partial_n g)(x) dx \right).
\end{gather}
The last two terms can be re-written as
\begin{gather}
 \sum_j \left(-\int_{\partial M_j} (\partial_n f)(x) g(x) dx +  \int_{\partial M_j} f(x) (\partial_n g)(x) dx \right) \nonumber \\=
 -\int_{\Sigma} (D_n f)(x) g(x) dx +  \int_{\Sigma} (Df)(x) (\partial_n g)(x) dx
\end{gather}
if the normal vector field $\partial_n$ at the point $x$ is chosen to be $\partial_{n(x_+)}$.
In other words, in the sense of distributional derivatives  $-\Delta f$ is the distribution
\begin{gather}
  E(-\Delta f_j) + (D_n f) \otimes \delta_{ \Sigma} + (D f) \otimes \delta'_{ \Sigma}.
\end{gather}
Here the distributions $\delta_{\Sigma}$ and $\delta'_{\Sigma}$ are the Dirac delta masses and the corresponding normal derivative
along the hypersurface $\Sigma$. The tensor product here is understood in the sense that pairing with test functions is defined as follows
\begin{gather}
(h \otimes \delta_{\Sigma})(g) := \int_{\Sigma} h(x) g(x) dx
\end{gather}
and
\begin{gather}
 (h \otimes \delta'_{\Sigma})(g) :=  - \int_{\Sigma} h(x) (\partial_n g)(x) dx.
\end{gather}

In particular, if the functions $f_j$ satisfy the eigenvalue equation $(\Delta + \lambda) f_j =0$ on each component $M_j$ then 
we have in the sense of distributions
\begin{gather}
 (-\Delta - \lambda) f = (D_n f) \otimes \delta_{ \Sigma} + (D f) \otimes \delta'_{\Sigma}.
\end{gather}

Since $\Sigma$ was assumed to be piecewise smooth and Lipschitz, the Sobolev spaces $H^s(\Sigma)$ are well defined for any $s \in \R$. 
\newpage

\begin{theorem}
 There exists a constant $C>0$ which can be obtained explicitly for a given Riemannian manifold $M$ and decomposition $(M_j)$ once the Sobolev norms are defined in local coordinates, such that the following statement holds.
 Suppose that $(\phi_j)$ is a collection of smooth functions on $M_j$, and denote by $\phi$ the corresponding function $E(\phi_j)$ on $M$.
 Suppose furthermore that
 \begin{enumerate}
  \item $\| \phi\|_{L^2(M)}=1$,
  \item $-\Delta \phi - \lambda \phi = \chi$ on $M \backslash \Sigma$,
  \item $\| \chi \|_{L^2(M)}= \eta$,
  \item $C  \left( \| D \phi \|^2_{H^{-\frac{1}{2}}(\Sigma)} + \| D_n \phi \|^2_{H^{-\frac{3}{2}}(\Sigma)} \right)^{\frac{1}{2}} = \epsilon <1$.
 \end{enumerate}
 Then there exists an eigenvalue $\lambda_j$ of $-\Delta$ in the interval
 $$
  [\lambda - \frac{(1+\lambda) \epsilon + \eta}{1-\epsilon},\lambda+\frac{(1+\lambda) \epsilon + \eta}{1-\epsilon}].
 $$
\end{theorem}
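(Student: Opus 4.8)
The plan is to adapt the Fox--Henrici--Moler argument to this geometric setting: I would rewrite the equation satisfied by $\phi$ as a distributional identity on all of $M$, subtract from $\phi$ a small $L^2$ correction supported near $\Sigma$ that absorbs the singular part and lands the difference inside $\dom(-\Delta)$, and then invoke the spectral theorem for the self--adjoint operator $-\Delta$.

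First, by the Green's-formula computation preceding the statement, $-\Delta\phi = E(-\Delta\phi_j) + (D_n\phi)\otimes\delta_\Sigma + (D\phi)\otimes\delta'_\Sigma$ in $\D'(M)$; hypothesis (2) identifies $E(-\Delta\phi_j)$ with $\lambda\phi+\chi$ almost everywhere, so $-\Delta\phi = \lambda\phi + \chi + \mu$ with $\mu := (D_n\phi)\otimes\delta_\Sigma + (D\phi)\otimes\delta'_\Sigma$. The key structural observation is that $\mu$ actually lies in $H^{-2}(M)$ with a controllable norm: the trace maps $g\mapsto g|_\Sigma$ and $g\mapsto\partial_n g|_\Sigma$ are bounded $H^{2}(M)\to H^{3/2}(\Sigma)$ and $H^{2}(M)\to H^{1/2}(\Sigma)$, so dualizing shows $b\mapsto b\otimes\delta_\Sigma$ is bounded $H^{-3/2}(\Sigma)\to H^{-2}(M)$ and $a\mapsto a\otimes\delta'_\Sigma$ is bounded $H^{-1/2}(\Sigma)\to H^{-2}(M)$. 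Since also $(-\Delta+1)^{-1}$ extends to a bounded map $H^{-2}(M)\to L^2(M)$, I would put $h := (-\Delta+1)^{-1}\mu \in L^2(M)$; the product of these three operator norms is exactly the constant $C$ (finite and explicitly computable once the local Sobolev norms on $M$ and on $\Sigma$ have been fixed), so hypothesis (4) becomes $\|h\|_{L^2(M)}\le\epsilon$.

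Now set $v := \phi - h$. Since $(-\Delta+1)h = \mu$ gives $-\Delta h = \mu - h$ in $\D'(M)$, the $\delta_\Sigma$ and $\delta'_\Sigma$ terms cancel and
\[
 -\Delta v = (\lambda\phi + \chi + \mu) - (\mu - h) = \lambda\phi + \chi + h = \lambda v + (1+\lambda)h + \chi \in L^2(M).
\]
Because $-\Delta$ is essentially self--adjoint on $C^\infty(M)$ with closure domain $H^2(M)$, the fact that $v\in L^2(M)$ and $\Delta v\in L^2(M)$ forces $v\in H^2(M)=\dom(-\Delta)$, so $(-\Delta-\lambda)v=(1+\lambda)h+\chi$ holds in $L^2(M)$ with $v$ genuinely in the domain. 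The spectral theorem then gives $\operatorname{dist}(\lambda,\spec(-\Delta))\,\|v\|_{L^2(M)}\le\|(-\Delta-\lambda)v\|_{L^2(M)}\le(1+\lambda)\|h\|_{L^2(M)}+\|\chi\|_{L^2(M)}\le(1+\lambda)\epsilon+\eta$, while $\|v\|_{L^2(M)}\ge\|\phi\|_{L^2(M)}-\|h\|_{L^2(M)}\ge 1-\epsilon>0$; dividing yields $\operatorname{dist}(\lambda,\spec(-\Delta))\le\frac{(1+\lambda)\epsilon+\eta}{1-\epsilon}$, which is the asserted inclusion.

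The hard part will be the middle step: pinning down the trace inequalities and the elliptic resolvent bound with explicit constants, so that the composite map $(a,b)\mapsto(-\Delta+1)^{-1}(b\otimes\delta_\Sigma+a\otimes\delta'_\Sigma)$ is bounded $H^{-1/2}(\Sigma)\oplus H^{-3/2}(\Sigma)\to L^2(M)$ with a norm one can actually name. Everything else is routine: the cancellation of the layer distributions, the automatic membership of $v$ in $\dom(-\Delta)$ by essential self--adjointness, and the concluding two-line estimate. It is worth noting that $h$ plays here exactly the role that the harmonic extension of the boundary values plays in the classical single-domain proof.
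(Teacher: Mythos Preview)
Your proposal is correct and follows essentially the same route as the paper: define $h=(-\Delta+1)^{-1}\mu$ (the paper calls this $g$), subtract it from $\phi$ to cancel the layer distributions, and then apply the resolvent/spectral bound to $\phi-h$ together with the triangle inequality. The only differences are cosmetic---you are slightly more explicit about the duality argument behind the $H^{-2}$ bound and about why $\phi-h$ lies in $\dom(-\Delta)$, whereas the paper leaves these implicit.
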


\begin{proof}
 By the Sobolev restriction theorems the distributions $(D_n f) \otimes \delta_{\partial \Sigma}$ as well as $(D f) \otimes \delta'_{\partial \Sigma}$
 are in $H^{-2}(M)$ and we have
 \begin{gather*}
  \| D\phi \otimes \delta'_{ \Sigma}\|_{H^{-2}(M)} \leq C_1 \|  D \phi \|_{H^{-1/2}(\Sigma)},\\
  \| D_n\phi \otimes \delta_{ \Sigma}\|_{H^{-2}(M)} \leq C_2 \|  D_n \phi \|_{H^{-3/2}(\Sigma)}.
 \end{gather*}
Loosely speaking this follows since restriction to a co-dimension one Lipschitz hypersurface is continuous as a map from $H^s$ to $H^{s-\frac{1}{2}}$ for $s> \frac{1}{2}$ and the
corresponding dual statement. These estimates can also be obtained in local coordinates using the Fourier transform. The constants $C_1$ and $C_2$ can therefore be estimated
once local charts are fixed.

Let us define the distribution $g:= (-\Delta +1)^{-1}\left(  (D_n f) \otimes \delta_{ \Sigma} + (D f) \otimes \delta'_{\Sigma} \right)$. Then, by elliptic regularity,
$g \in L^2(M)$ and
$$
\| g \|_{L^2(M)} = \epsilon \leq C \left( \| D \phi \|^2_{H^{-\frac{1}{2}}(\Sigma)} + \| D_n \phi \|^2_{H^{-\frac{3}{2}}(\Sigma)} \right)^{\frac{1}{2}}.
$$
One checks by direct computation that
$$
 (-\Delta - \lambda) (\phi -g)  = \chi + (1 + \lambda) g.
$$
Using 
\begin{gather*}
 \| \chi +(1 + \lambda) g \|_{L^2(M)} \leq \eta + |1+\lambda| \| g\|_{L^2(M)},\\
 \| \phi-g \| \geq 1 - \| g \|_{L^2(M)},
\end{gather*}
one obtains
$$
 \| (-\Delta - \lambda)^{-1}\|_{L^2(M)} \geq \frac{1-\| g\|_{L^2(M)}}{\eta +|1+\lambda| \|g\|_{L^2(M)}}.
$$
This implies the statement as the resolvent norm is bounded by the distance to the spectrum.
\end{proof}

Of course, $ \| g \|^2_{H^{s}(\Sigma)} \leq \| g \|^2_{L^2(\Sigma)}$ for any $s \leq 0$ so, one also obtains a bound in terms
of 
$\left( \| D \phi \|^2_{L^2(\Sigma)} + \| D_n \phi \|^2_{L^2(\Sigma)} \right)^{\frac{1}{2}}$, although this bound does not take into account
the different microlocal properties of $D_n \phi$ and $D \phi$, i.e. their behaviour for large frequencies.

\section{Hyperbolic Surfaces and Teichm\"uller Space} \label{hypsuf}

The following section is a brief description of the construction and theory of hyperbolic surfaces. In the same way as the sphere $S^2$ admits a round metric
and the torus $T^2$ admits a two dimensional family of flat metrics, a two dimensional compact manifold $M$ of genus $\mathrm{g} \geq 2$ admits a family of metrics of constant negative curvature $-1$.
By the theorem of Gauss-Bonnet all these metrics yield the same volume
$$
 \mathrm{Vol}(M) = 4 \pi (\mathrm{g} -1 ).
$$
For an introduction into hyperbolic surfaces and their spectral theory, we would like to refer to the reader to the excellent monograph \cite{buser2010geometry}.
We start by describing some two dimensional spaces of constant curvature $-1$.

\begin{itemize}

\item{\bf The upper half space} 

The hyperbolic upper half space  $\mathbb{H}$ is defined as $\mathbb{H}:=\{ x+iy \in \C \mid y>0\}$ with metric
$$
 g = y^{-2} (dx^2 + dy^2).
$$
The Laplace operator with respect to this metric is then given by
$$
 - \Delta = -y^2 \left( \frac{\partial^2}{\partial x^2} + \frac{\partial^2}{\partial y^2}\right).
$$
The geodesics in  this space are circles that are perpendicular to the real line. The group of isometries of the space is the group
$PSL(2,\R)$. The action of $PSL(2,\R)$ derives from the action of $SL(2,\R)$ on $\mathbb{H}$ by fractional linear transformations as follows.
$$
 \left( \begin{matrix} a & b \\ c & d \end{matrix} \right) z = \frac{a z +b}{ c z + d}.
$$
Since  $\left( \begin{matrix} -1 & 0 \\ 0 & -1 \end{matrix} \right)$ acts trivially, this factors to an action of  $PSL(2,\R) =SL(2,\R) / \{-1,1\}$. It is easy to check that this acts as a group of isometries.

\item{\bf The Poincar\'e disc}

The Poincare disc $\mathbb{D}$ is defined as $\mathbb{D}:=\{ x+iy \in \C \mid x^2 + y^2 < 1\}$ with metric
$$
 g = \frac{4}{(1-x^2-y^2)^2} (dx^2 + dy^2).
$$
Geodesics in this model are circles perpendicular to the unit circle and straight lines through the origin.
This space has constant negative curvature $-1$ and is simply connected. It therefore is isometric to the hyperbolic plane. An isometry from $\mathbb{D}$ to $\mathbb{H}$ is for example the Moebius transformation
$$
 z \mapsto \I \frac{1 + z}{1-z}.
$$

\item{\bf Hyperbolic cylinders}

Let $\ell>0$. Then the hyperbolic cylinder can be defined as the quotient $Z_\ell:=\Gamma \backslash \mathbb{H}$ of $\mathbb{H}$ by the group $\Gamma \subset SL(2,\R)$ defined by
$$
 \Gamma= < \left( \begin{matrix}  e^{\ell/2} & 0 \\ 0 & e^{-\ell/2} \end{matrix} \right) > = \left \{ \left( \begin{matrix}  e^{\ell k /2} & 0 \\ 0 & e^{-\ell k/2} \end{matrix} \right) \mid k \in \Z \right \}.
$$
A fundamental domain is depicted in the Figure \ref{cyl-fund}.
\begin{figure}[h] 
\centering
\includegraphics*[width=10cm]{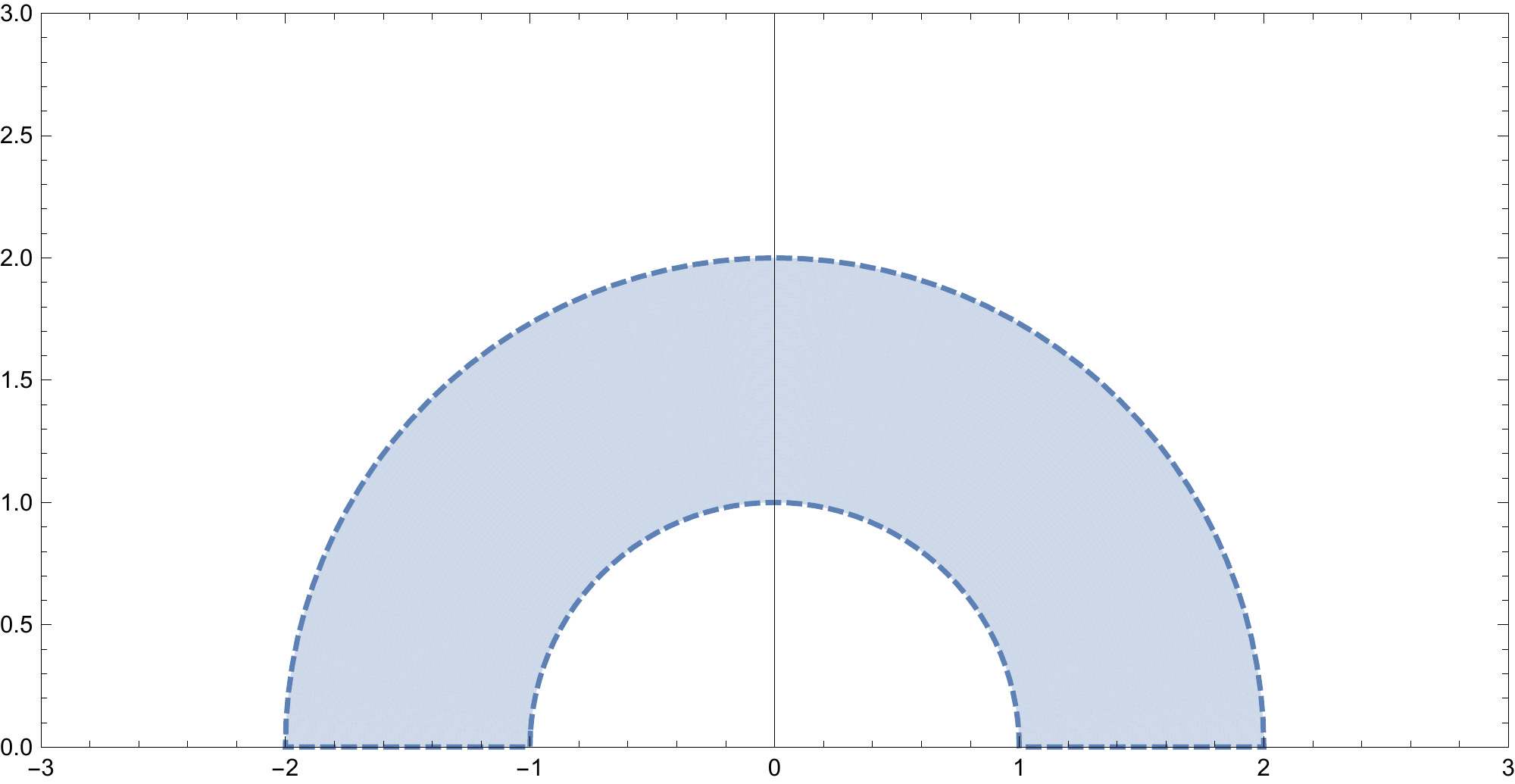}
\caption{Fundamental domain for a hyperbolic cylinder}\label{cyl-fund}
\end{figure}  Using the angle $\varphi = \arctan(x/y)$ and $t = \frac{1}{2}\log(x^2 +y ^2)$ as coordinates the metric becomes
$$
 g = \frac{1}{\cos^2 \varphi}(d\varphi^2 + dt^2).
$$
We can also use Fermi coordinates $(\rho,t)$, where $t$ is as before and $\cosh \rho =  \frac{1}{\cos \varphi}$. The coordinate $\rho$ is the oriented hyperbolic distance from the
$y$-axis in $\mathbb{H}$. On the quotient $Z_\ell$ the $y$-axis projects to a closed geodesic of length $\ell$. This is the unique simple closed geodesic on $Z_{\ell}$.
Using Fermi coordinates we can see that the hyperbolic cylinder $Z_\ell$ is isometric to $\mathbb{R} \times (\R / \ell \Z)$ with metric
$$
 d \rho^2 + \cosh^2 \rho \;dt^2.
$$
The Laplace operator in these coordinates
\begin{gather}
 -\frac{1}{\cosh{\rho}}\frac{\partial}{\partial\rho} \cosh{\rho} \frac{\partial}{\partial\rho} - \frac{1}{ \cosh^2{\rho}}\frac{\partial^2}{\partial t^2}.
\end{gather}
A large set of solutions of the eigenvalue equation $(-\Delta - \lambda) \Phi=0$ can then be obtained by separation of variables.
Namely, if we assume that
$$
 \Phi(\rho,t) = \Phi_k(\rho) \exp(2 \pi i \frac{t}{\ell})
$$
for some $k \in \Z$
then the eigenvalue equation is equivalent to 
\begin{gather} \label{dequ}
 (-\frac{1}{\cosh{\rho}}\frac{d}{d\rho} \cosh{\rho} \frac{d}{d\rho} + \frac{4 \pi^2 k^2}{\ell^2 \cosh^2{\rho}} -\lambda) \Phi_k(\rho)=0
\end{gather}
A fundamental system of (non-normalized) solutions of this equation, consisting of an even and an odd function, can be given explicitly for each $k \in \Z$ in terms of hypergeometric functions
\begin{gather}
 \Phi_k^{even}(\rho)= (\cosh{\rho})^{\frac{2 \pi \I k}{\ell}} \; {}_2 \mathrm{F}_1(\frac{s}{2}+\frac{\pi \I k}{\ell},\frac{1-s}{2}+\frac{\pi \I k}{\ell};\frac{1}{2};-\sinh^2\rho),\\
 \Phi_k^{odd}(\rho)= \sinh{\rho} (\cosh{\rho})^{\frac{2 \pi \I k}{\ell}} \; {}_2 \mathrm{F}_1(\frac{1+s}{2}+\frac{\pi \I k}{\ell},\frac{2-s}{2}+\frac{\pi \I k}{\ell};\frac{3}{2};-\sinh^2\rho),\nonumber
\end{gather}
where $\lambda=s(1-s)$ (see \cite{borthwick2010sharp}, where these functions are analysed). Normalization gives the corresponding solutions to the initial value problems.

\item{\bf Hyperbolic pair of pants}

\begin{figure}[h] 
\centering
\includegraphics*[width=5cm]{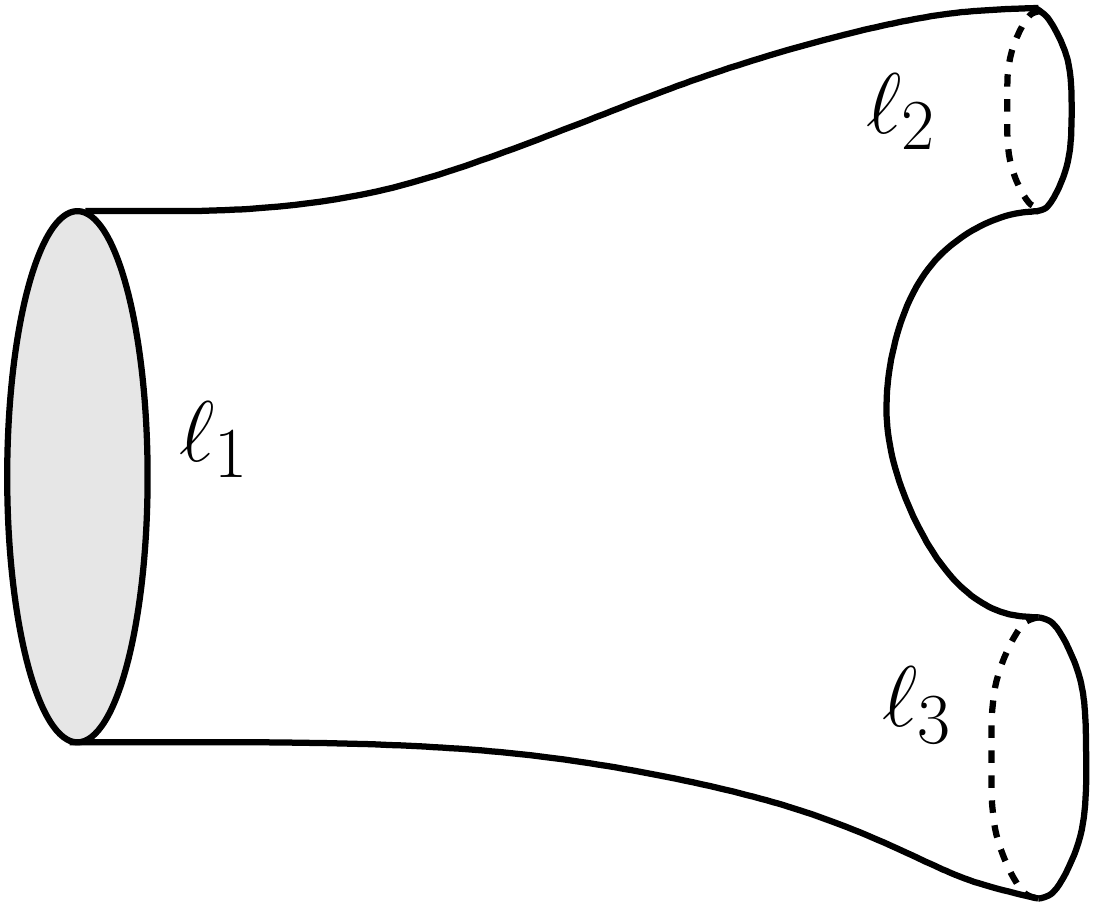}
\caption{$Y$-piece with boundary geodesics}\label{pants}
\end{figure} 

For any given $\ell_1, \ell_2, \ell_3 >0$ one can construct a right angled geodesic hexagon in the hyperbolic plane such that the length of every second side is
$\ell_1/2, \ell_2/2$ and $\ell_3/2$. Two such hexagons can then be glued along the other sides to form a hyperbolic surface with three geodesic boundary components
of lengths $\ell_1, \ell_2, \ell_3$.
A hyperbolic pair of pants can also be glued from a subset of a hyperbolic cylinder as depicted in the figure.
\begin{figure}[h] 
\centering
\includegraphics*[width=8cm]{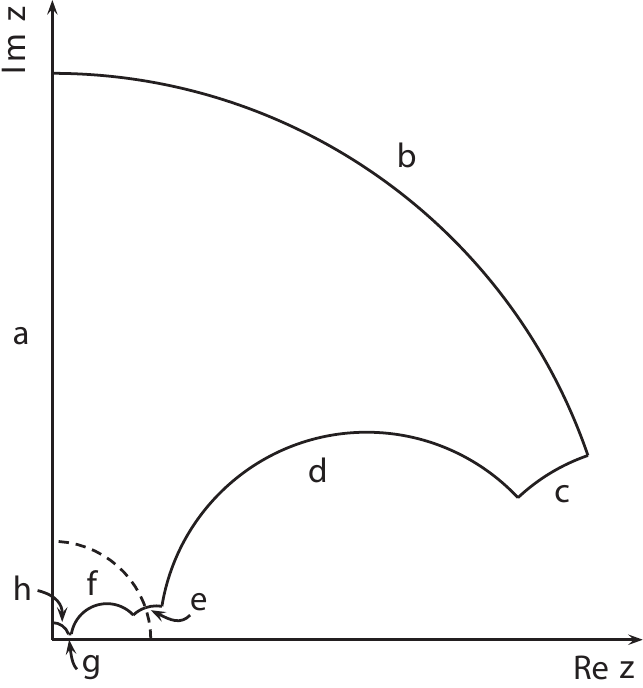}
\caption{Two hyperbolic hexagons together form an octagon which can be glued into a pair of pants}\label{pants2}
\end{figure} 

\item{\bf General surfaces of genus $\mathrm{g}$}

Let $\mathrm{g}>2$ be an integer. Suppose we are given $2\mathrm{g}-2$ pairs of pants, and a three-valent graph together with a map that associates with each vertex a pair of pants,
and with each edge  associated with that vertex a boundary component of that pair of pants. So each edge of the graph will connect two vertices and will therefore correspond
to two different boundary components of that pair of pants. Suppose that these boundary components have the same length. So each edge of the graph will have a length
$\ell_j$ associated to it. There are $3 \mathrm{g}-3$ such edges. We can then glue the hyperbolic pair of pants together along the boundary components using a gluing scheme
that identifies each collar neighborhood of the boundary component with a subset of the corresponding hyperbolic cylinder. Such a gluing is unique up to a twist angle
$\alpha_j \in S^1$. Once such a twist angle is fixed we obtain a surface of genus $\mathrm{g}$ equipped with a hyperbolic metric. It is known that each oriented hyperbolic surface
can be obtained in this way. The parameters $\ell_j$ and $\alpha_j$ then constitute the Fenchel-Nielsen parameters of that construction. For each given three-valent graph
and $6\mathrm{g}-6$ Fenchel-Nielsen parameters there is a hyperbolic surface constructed. Of course, it may happen that different Fenchel-Nielsen parameters yield an isometric
surface. It can be shown that there is a discrete group, the mapping class group, acting on the Teichm\"uller space $\R^{6\mathrm{g}-6}$ such that the quotient
coincides with the set of hyperbolic metrics on a given two dimensional oriented surface.
\begin{figure}[htp] 
\centering
\includegraphics*[width=11cm]{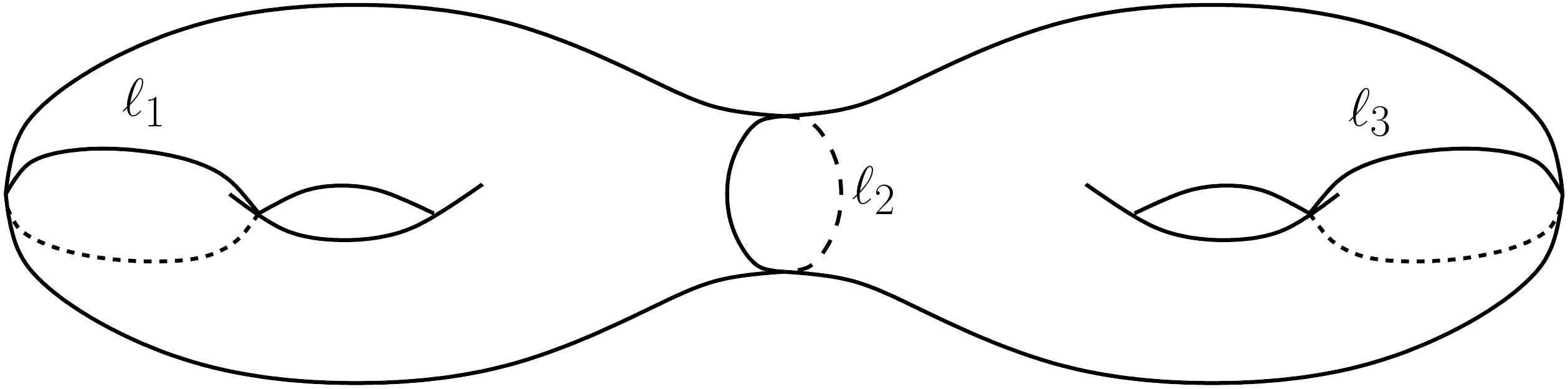}
\caption{Genus two hyperbolic surface glued from two pairs of pants}\label{pants3}
\end{figure} 
\begin{figure}[htp] 
\centering
\includegraphics*[width=11cm]{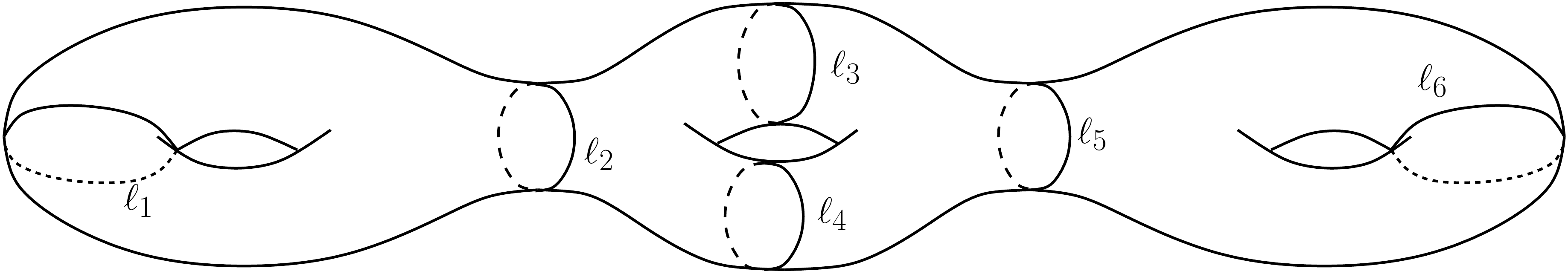}
\caption{Genus three hyperbolic surface glued from four pair of pants}\label{pants4}
\end{figure} 

\end{itemize}

\section{The Method of Particular Solutions for Hyperbolic Surfaces}

In the following, we will describe a very efficient way to implement the method of particular solutions for hyperbolic surfaces.
Each surface can be decomposed into $2 \mathrm{g}-2$ pairs of pants. Each pair of pants can then be cut open along one geodesic connecting two boundary components
to obtain a subset of a hyperbolic cylinder. Our surface $M$ can therefore be glued from $2\mathrm{g}-2$ subsets $M_j$ of hyperbolic cylinders.
$$
 M= \cup_j M_j.
$$
This gives a decomposition of $M$ as discussed before and the hypersurface $\Sigma$ will consist of geodesic segments.
On each piece $M_j$ we have a large set of functions satisfying the eigenvalue equation
$(-\Delta - \lambda) \Phi=0$ by restricting the functions constructed on the hyperbolic cylinder to $M_j$.
If we let $k$ vary between $-N$ and $+N$ we obtain a $2(2N+1)$-dimensional space of functions with a canonical basis.
We can assemble these into a $2(2N+1)(2\mathrm{g}-2)$-dimensional subspace $\mathcal{W}_N^{(\lambda)}$ in $L^\infty(M)$. 
Basis elements in this subspace are indexed by $j \in \{1,\ldots, 2\mathrm{g}-2\}$, by $k \in \{-N, -N+1,\ldots,N-1,N\}$ and by $\{e,o\}$
where the last index distinguishes between even and odd solutions of the ODE. We will assemble all these indices into a larger index
$\alpha$. So we have a set of basis function $\Phi_\alpha^{(\lambda)}$ on $\sqcup_j M_j$ and we would like to apply the estimate MPS in order to find eigenvalues.

A simple strategy is as follows. Discretize the geodesic segments of $\Sigma$ into a finite set of $Q$ points $(x_j)_{j=1,\ldots,Q}$. In order to keep things simple let us avoid corners.
So every point $x_j$ will be contained in the boundary of precisely two components, so there are exactly two points $y_j$ and $\tilde y_j$ in
$\sqcup_j \partial M_j$ that correspond to this point.
A simple strategy of MPS for these surfaces is therefore to form the matrices
\begin{gather*}
  A_\lambda=(a_{j \alpha}), \quad a_{j \alpha}= \Phi_\alpha^{(\lambda)}(y_j),\\
  \tilde A_\lambda=(\tilde a_{j \alpha}), \quad \tilde a_{j \alpha}= \Phi_\alpha^{(\lambda)}(\tilde y_j),\\
  B_\lambda=(b_{j \alpha}), \quad b_{j \alpha}= \partial_n \Phi_\alpha^{(\lambda)}(y_j)\\
  \tilde B_\lambda=(\tilde b_{j \alpha}), \quad \tilde b_{j \alpha}=  \partial_n \Phi_\alpha^{(\lambda)}(\tilde y_j).
\end{gather*}

We assemble $Q_\lambda:= (A _\lambda- \tilde A_\lambda) \oplus (B_\lambda + \tilde B_\lambda)$ and also $R_\lambda:=A_\lambda  \oplus \tilde A_\lambda \oplus B_\lambda \oplus \tilde B_\lambda$.
Then the smallest singular value
$$
 s_\lambda = \inf_{v \not=0} \frac{\| Q_\lambda v\|}{\| R_\lambda v \|}
$$
of the pair $(Q_\lambda,R_\lambda)$ is then a measure of how close we are to an eigenvalue.

For a quantitative statement see \cite{strohmaier2013algorithm} where this method is described and analysed in great detail. 
The idea behind this is easily explained as follows. Suppose that $\lambda$ is an eigenvalue. Then there exists a corresponding eigenfunction
$\phi$. This eigenfunction can be restricted to each piece $M_j$ and can then be expanded in our basis functions. Since the eigenfunction is analytic, the Fourier series with respect to the circle action on the hyperbolic cylinder converges exponentially fast. This means the eigenfunction is approximated exponentially well by the chosen basis functions
$\Phi_\alpha$. Cutting off at a Fourier mode will produce an error in the $C^1$-norm that is exponentially small as $N$ becomes large. 
Since the actual eigenfunction satisfies $D \phi=0$ and $D_n \phi=0$ its approximation by our basis functions $\phi_N$ will satisfy the same equation up to an exponentially small error.
Therefore, if $v$ is the coefficient vector of $\phi_N$ with respect to our basis $\Phi_\alpha$, the norm of $Q_\lambda v$ is very small.
On the other hand, by Green's formula, the boundary data of $\phi$ does not vanish on $\partial M_j$ but merely gives a measure for its $L^2$-norm.
So the norm of $R_\lambda v$ will be comparable to the $L^2$-norm of $\phi$. We conclude that $s_\lambda$ is exponentially small as $N$ gets large if $\lambda$
is an eigenvalue.

Conversely, since $Q_\lambda v$ roughly approximates the $L^2$-norm of $D \phi \oplus D_n \phi$ and  $R_\lambda v$ roughly approximates the $L^2$ norm
of $\phi$, the quotient will not be small if $\lambda$ is not a eigenvalue. 

Hence, if we plot $s_\lambda$ as a function of $\lambda$ we will be able to find the eigenvalues. In a similar way,
multiplicities can be found by looking at higher singular values.

\subsection{The Bolza surface}

In the following, we would like to illustrate this method and some results for the case of the Bolza surface. 
The Bolza surface is the unique oriented hyperbolic surface of genus $2$ with maximal group of orientation preserving isometries of order $48$. 
It can be described in several different ways.

The easiest way uses the Poincare disk model. Define the regular geodesic octagon with corner points
$2^{-\frac{1}{4}} \exp(\frac{\pi i k}{4})$. In order to obtain the Bolza surface,
opposite sides are identified by means of hyperbolic isometries
using the identification scheme as in the figure.
\begin{figure}[h]
\begin{center}
\begin{tikzpicture}[scale=3];
\draw (1,0) arc[x radius = 1, y radius = 1, start angle= 0, end angle= 360];
\draw  [magenta,line width=2pt] (0,0.840896) arc[x radius = 0.4550898605622274, y radius = 0.4550898605622274, start angle= 202.5, end angle= 292.5];
\draw [rotate=45,orange,line width=2pt] (0,0.840896) arc[x radius = 0.4550898605622274, y radius = 0.4550898605622274, start angle= 202.5, end angle= 292.5];
\draw [rotate=90,blue,line width=2pt]  (0,0.840896) arc[x radius = 0.4550898605622274, y radius = 0.4550898605622274, start angle= 202.5, end angle= 292.5];
\draw [rotate=135,green,line width=2pt] (0,0.840896) arc[x radius = 0.4550898605622274, y radius = 0.4550898605622274, start angle= 202.5, end angle= 292.5];
\draw [rotate=180,magenta,line width=2pt] (0,0.840896) arc[x radius = 0.4550898605622274, y radius = 0.4550898605622274, start angle= 202.5, end angle= 292.5];
\draw [rotate=225,orange,line width=2pt]  (0,0.840896) arc[x radius = 0.4550898605622274, y radius = 0.4550898605622274, start angle= 202.5, end angle= 292.5];
\draw [rotate=270,blue,line width=2pt] (0,0.840896) arc[x radius = 0.4550898605622274, y radius = 0.4550898605622274, start angle= 202.5, end angle= 292.5];
\draw [rotate=315,green,line width=2pt] (0,0.840896) arc[x radius = 0.4550898605622274, y radius = 0.4550898605622274, start angle= 202.5, end angle= 292.5];
\end{tikzpicture}
\end{center}
\caption{The Bolza surface obtained from a regular octagon in the hyperbolic plane}
\end{figure}
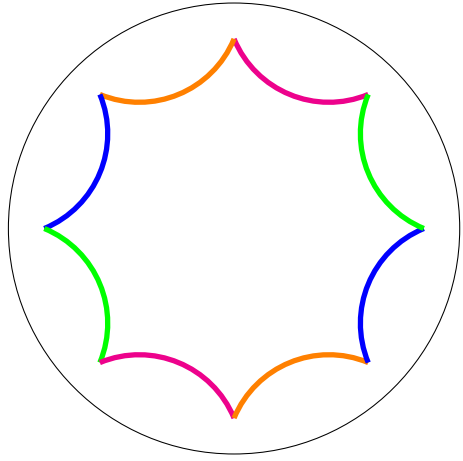

The group of orientation preserving isomtries is $GL(2,\mathbb{Z}_3)$ which is a double cover of $S_4$. The full isometry group 
$GL(2,\mathbb{Z}_3)\rtimes \mathbb{Z}_{2}$ has $13$ isomorphism classes of irreducible representations: four one-dimensional, two two-dimensional, four three-dimensional, and three four-dimensional ones. 
The representation theory of this group and its connection
to boundary conditions on subdomains has been worked out in detail by Joe Cook in his thesis (\cite{cook:thesis}).
It was claimed by Jenni in his PhD thesis
that the first non-zero eigenspace is a three dimensional irreducible representation. The proof seems to rely on some numerical input as well.
Jenni also gives the bound for the first non-zero eigenvalue $3.83<\lambda_1<3.85$.

The Bolza surface was also investigated by Aurich and Steiner in the context of quantum chaos (see for example \cite{aurich1990energy, aurich1989periodic}), where it was referred to as the Hadamard-Gutzwiller model.
A finite element method was applied to the surface and the first non-zero eigenvalue was indeed found to be of multiplicity three and was given by $\lambda_1=3.838$.
Nowadays, it is not difficult to code the Bolza surface in the available finite element frameworks. It can be done rather quickly in the freely available FreeFEM++ (\cite{hecht2012new}). 
Its Fenchel-Nielsen m-w-coordinates can be worked out to be
\begin{gather*}
(\ell_1,t_1;\ell_2,t_2;\ell_3,t_3)=\\=
  (2\,\mathrm{arccosh}{(3+2 \sqrt{2})},\frac{1}{2};2\,\mathrm{arccosh}{(1+\sqrt{2})},0;2\, \mathrm{arccosh}{(1+\sqrt{2})},0).
\end{gather*}
Another more symmetric decomposition of the Bolza surface into pairs of pants \footnote{derived by Lucy McCarthy in a  project} is one with Fenchel Nielsen paramaters given by
\begin{gather*}
(\ell_1,t_1;\ell_2,t_2;\ell_3,t_3)=(\ell_s,t;\ell_s,t;\ell_s,t), \\
 \ell_s=2\,\mathrm{arccosh}{(1+ \sqrt{2})},\\
 t=\frac{\mathrm{arccosh}\left(\sqrt{\frac{2}{7} \left(3+\sqrt{2}\right)}\right)}{\mathrm{arccosh}
   \left(1+\sqrt{2}\right)}.
\end{gather*}

Note that the Bolza surface is also extremal in the sense that it is the unique maximizer for the length of the systole.

The method of particular solutions can now be applied to the Bolza surface as well. The general code for genus $2$ surfaces was written by Ville Uski (see \cite{strohmaier2013algorithm}).
Based on our paper,

\begin{figure}[htp] 
\centering
\includegraphics*[width=10cm]{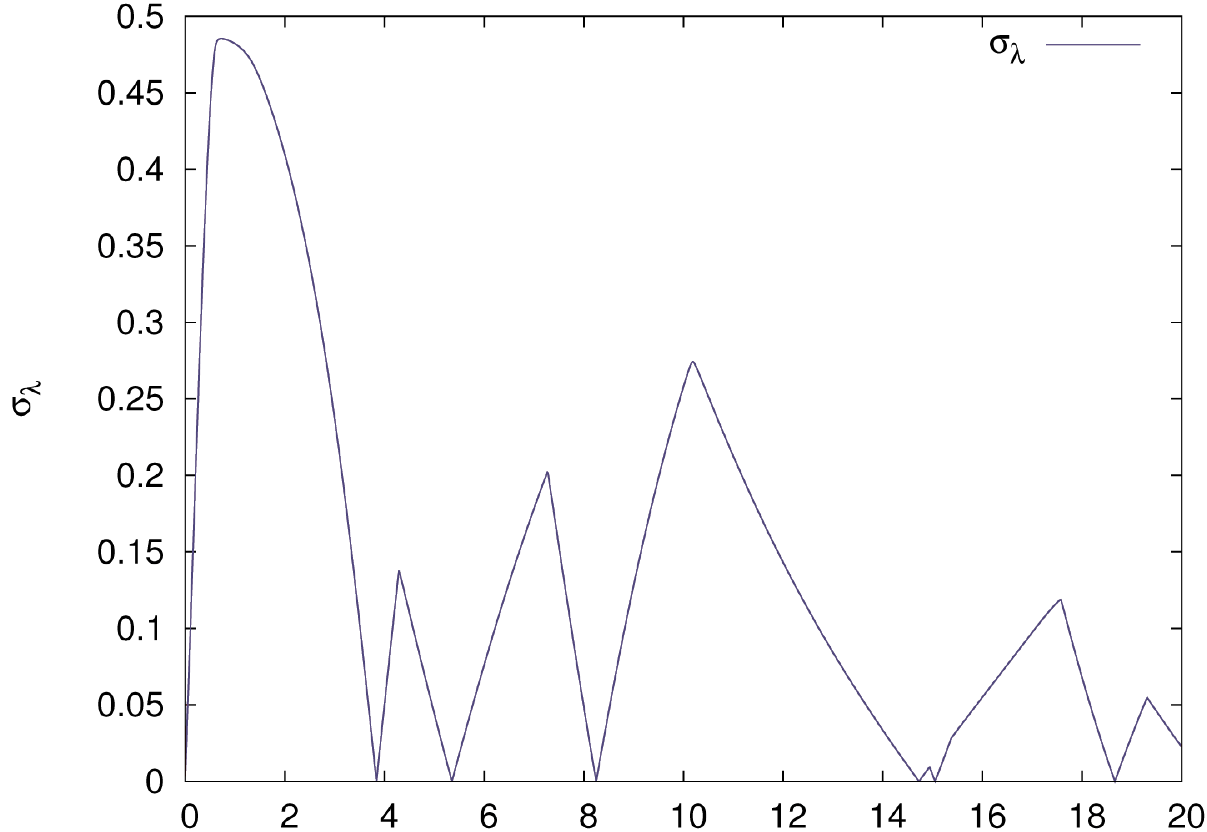}
\caption{Smallest singular value as a function of $\lambda$}\label{bolza-plot}
\end{figure} 

\begin{figure}[htp] 
\centering
\includegraphics*[width=10cm]{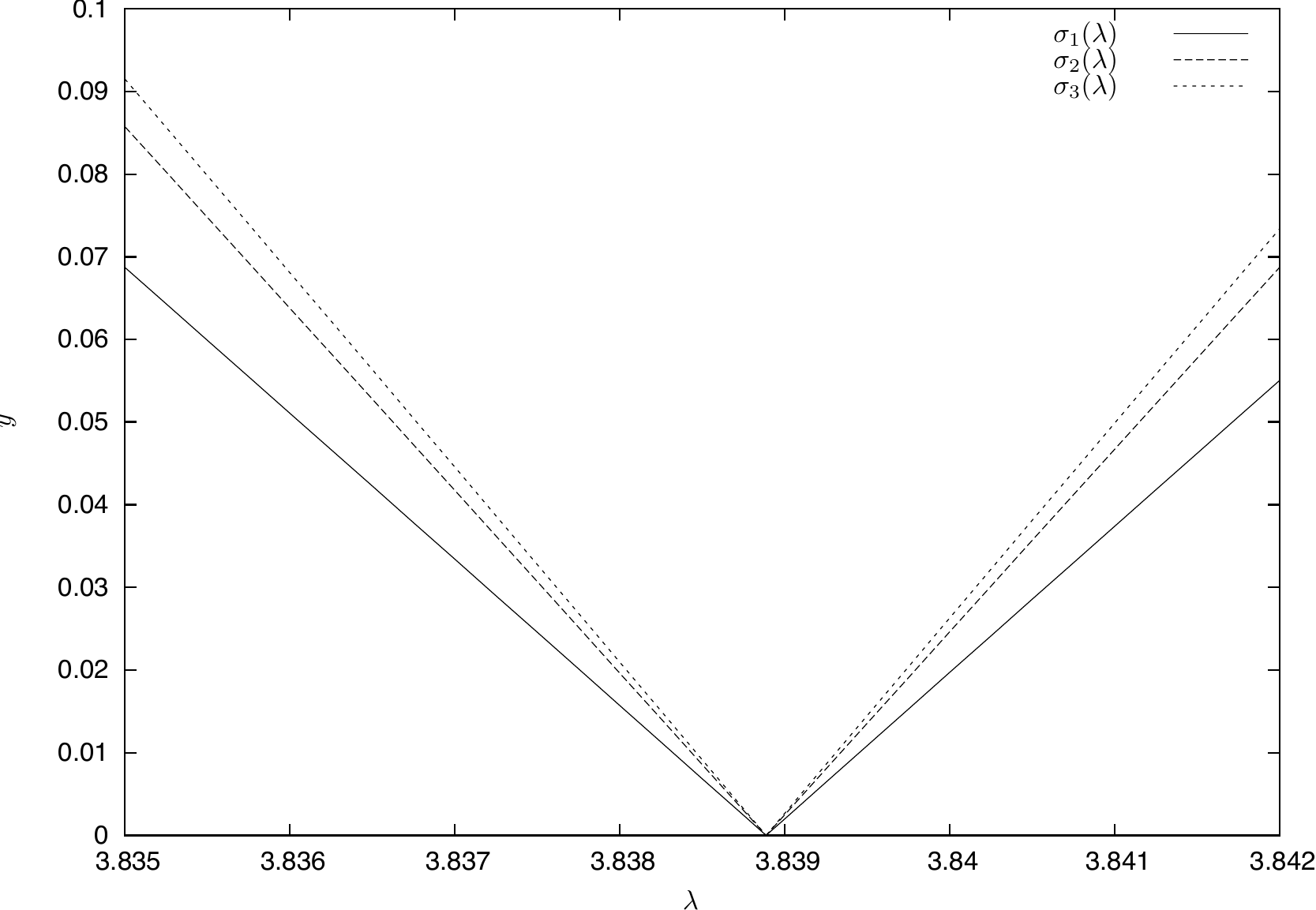}
\caption{Smallest three singular value as a function of $\lambda$}\label{bolza-mag}
\end{figure} 

with high precision, one finds a multiplicity three eigenvalue at
$$
 \lambda_1 = 3.8388872588421995185866224504354645970819150157.
$$
The programme as well as further computed eigenvalues can be found at \url{http://www-staff.lboro.ac.uk/~maas3/publications/eigdata/datafile.html}.
Numerical evidence suggests that this is the global maximum for constant negative curvature genus $2$ surfaces. The reason for it being locally maximal is however its degeneracy.
For an analytic one parameter family of perturbations in Teichm\"uller space one can choose the eigenvalues $\lambda_1,\lambda_2$ and $\lambda_3$ to depend analytically on the perturbation
parameter. Numerically one can see that no matter what perturbation one chooses, none of the eigenvalues  $\lambda_1,\lambda_2$ and $\lambda_3$ has an extremal value at the Bolza surface.
The Bolza surface is also the unique global maximum of the length of the systole. This was shown by Schmutz-Schaller in \cite{schmutz1993reimann}, where more properties of the Bolza surface are discussed.

The following is a list of the first 38 non-zero eigenvalues computed using the method of particular solutions in the implementation described in the paper by Uski and the author in \cite{strohmaier2013algorithm}.\\

\begin{tabular}{c|c}
$\lambda_n$	 & multiplicity\\\hline
        3.83888725884219951858662245043546	&	3\\
         5.35360134118905041091804831103144	&	4\\
	8.24955481520065812189010645068245	&	2\\
	14.7262167877888320412893184421848	&	4\\
	15.0489161332670487461815843402588	&	3\\
	18.6588196272601938062962346613409	&	3\\
	20.5198597341420020011497712606420	&	4\\
	23.0785584813816351550752062995745	&	1\\
	28.0796057376777290815622079450011	&	3\\
	30.8330427379325496742439575604701	&	4\\
	32.6736496160788080248358817081014	&	1\\
	36.2383916821530902525410974752583	&	2\\
	38.9618157624049544290078974084124	&	4\\
\end{tabular}

\section{Heat Kernels, Spectral Asymptotics, and Zeta functions}

Let us start again with general statements. Let $M$ be a $n$-dimensional closed Riemannian manifold and let $-\Delta$ be the Laplace operator
acting on functions on $M$. Assume that $M$ is connected. Then the zero eigenspace is one-dimensional and we can arrange the eigenvalues
such that
$$
 0 = \lambda_0 < \lambda_1 \leq \lambda_2 \leq \ldots
$$
The fundamental solution $k_t(x,y)$ of the heat equation, i.e. the integral kernel of the operator
$e^{t\Delta}$ is well known to be a smoothing operator for all $t>0$. It is hence of trace class and, by Mercer's theorem, we have
\begin{gather}
 \mathrm{tr}(e^{t \Delta})=\sum_{j=0}^\infty e^{-t \lambda_j} = \int_M k_t(x,x) dx.
\end{gather}
For large $t$ one obtains
\begin{gather}
 \mathrm{tr}(e^{t \Delta}) -1 = O(e^{-c t}),
\end{gather}
for some $c>0$.
From the construction of a short time parametrix for the heat equation (see  for example \cite{chavel1984eigenvalues} ) one obtains that as $t\to 0^+$:
\begin{gather}
  \mathrm{tr}(e^{t \Delta}) = t^{-\frac{n}{2}} \sum_{j=0}^N a_j\; t^j + O(t^{N-n/2+1}),
\end{gather}
for any natural number $N$.
The coefficients $a_j$ are integrals of functions $a_j(x)$ that are locally computable from the metric, i.e.
\begin{gather}
 a_j = \int_M a_j(x) dx.
\end{gather}
The first couple of terms are well known
\begin{gather*}
 a_0(x) = \frac{1}{(4 \pi)^{n/2}},\\
 a_1(x) = \frac{1}{(4 \pi)^{n/2}} r(x)/6,
\end{gather*}
where $r$ is the scalar curvature. In two dimensions, the scalar curvature is twice the Gauss curvature so that we have
$a_1(x)=-\frac{1}{12 \pi}$ in the case of a hyperbolic surface, and by Gauss-Bonnet $a_1 = \frac{\mathrm{g}-1}{3}$.

An application of Ikehara's Tauberian theorem to the heat expansion yields Weyl's law that the counting function
$$
 N(\lambda) =\#\{\lambda_j \leq \lambda\}
$$
satisfies
$$
  N(\lambda) \sim C_n \mathrm{Vol}(M) \lambda^{n/2} ,
$$
where $C_n$ depends only on $n$. 

\subsection{Zeta functions}

Because of Weyl's asymptotic formula, the following zeta function is well defined and holomorphic in $s$ for $\Re(s)>\frac{n}{2}$:
$$
 \zeta_\Delta(s) := \sum_{j=1}^\infty \lambda_j^{-s}.
$$
This can easily be rewritten as
$$
  \zeta_\Delta(s) = \frac{1}{\Gamma(s)} \int_0^\infty t^{s-1} \left( \mathrm{tr} (e^{t \Delta}) -1 \right) dt.
$$
We can now split this integral into two parts to obtain
$$
 \Gamma(s)  \zeta_\Delta(s) = \int_0^1 t^{s-1} \left( \mathrm{tr} (e^{t \Delta}) -1 \right) dt +\int_1^\infty t^{s-1} \left( \mathrm{tr} (e^{t \Delta}) -1 \right) dt = I_1(s) + I_2(s).
$$
Note that $I_2(s)$ is entire in $s$. The integral $I_1(s)$ can be rewritten using the asymptotic expansion
$$
 I_1(s) = \int_0^1 t^{s-1} \left( \mathrm{tr} (e^{t \Delta}) -  t^{-\frac{n}{2}} \sum_{j=0}^N  a_j\; t^j  \right) dt +  \sum_{j=0}^N \int_0^1 a_j \; t^{j+s-1-\frac{n}{2}} dt -  
 \int_0^1 t^{s-1} dt.
$$
The last two terms together yield
$$
  -\frac{1}{s}+\sum_{j=0}^N \frac{a_j}{s+j-\frac{n}{2}},
$$
and the first integral is holomorphic for $\Re{s}>\frac{n}{2}-N$. This can be done for any natural number $N$.
Therefore, $I_1(s)$ has a meromorphic extension to the entire complex plane with simple poles at
$\frac{n}{2}-j$ and at $0$. Hence, we showed that $\zeta$ admits a meromorphic extension to the complex plane.
Since $\Gamma(s)$ has a pole at the non-positive integers this shows that $\zeta$ is regular at all the non-positive integers. In particular zero is not a pole of $\zeta$.
The above shows that $\zeta_\Delta(0)=-1$ if $n$ is odd and $\zeta_\Delta(0)= -1 + a_{\frac{n}{2}}$ if $n$ is even.
The value $\zeta_\Delta'(0)$ is therefore well defined and is used to define the zeta-regularized determinant $\det_\zeta(-\Delta)$ of $-\Delta$ as follows
$$
  \zeta'_\Delta(0)= - \log \mathrm{det}_\zeta(-\Delta).
$$
The motivation for this definition is the formula
$$
\log \det (A) = \sum_{j=1}^N \log \lambda_j = \left( - \frac{d}{ds} \sum_{j=1}^N \lambda_j^{-s} \right)|_{s=0},
$$
for a non-singular Hermitian $N \times N$-matrix with eigenvalues $\lambda_1,\ldots,\lambda_N$.

The computation of this spectral determinant is quite a challenge. The method of meromorphic continuation for the zeta function also is a method of computation
for the spectral determinant.

\section{The Selberg Trace Formula}

Suppose that $M$ is an connected oriented hyperbolic surface. Then there is an intriguing formula connecting the spectrum of the Laplace operator
to the length spectrum. 
Suppose that $g \in C^\infty_0(\R)$ is an even real valued test function. Then its Fourier transform $h = \hat g$ is an entire function defined on the entire complex plane. It is also in the Schwartz space $\mathcal{S}(R)$
and real valued on the real axis. As usual, we use the notation  $\lambda_j = r_j^2 +\frac{1}{4}$, where for eigenvalues smaller than $\frac{1}{4}$ we choose $r_j$ to have positive
imaginary part.
Hence, by Weyl's law, the sum 
$$
 \sum_{\lambda_j} h\left (\sqrt{\lambda_j - \frac{1}{4}} \right) =  \sum_{j} h(r_j)
$$
converges and depends continuously on $g$. It therefore defines an even distribution
$$
  \mathrm{Tr}\cos \left ( t \sqrt{\Delta - \frac{1}{4}} \right) 
$$
 in $\mathcal{D}'(\R)$.
Selberg's trace formula reads

 \begin{eqnarray*}
  \sum_{n=0}^{\infty} h(r_n) & = & \frac{\mathrm{Vol}(M)}{4 \pi}
  \int_{-\infty}^{\infty} r h(r) \tanh (\pi r) dr \nonumber + 
\sum_{k =1}^{\infty} \sum_{\gamma}\frac{\ell(\gamma)}{2\sinh(k \ell(\gamma)/2)}g(k \ell(\gamma)),
\end{eqnarray*}
where the second sum in the second term is over the set of primitive closed geodesics $\gamma$, whose length is denoted by $\ell(\gamma)$.
We would like to refer to Iwaniec's monograph \cite{iwaniec2002spectral} for an introduction and a derivation.
In the sense of distributions this reads as follows.
\begin{eqnarray*}\label{eqn:ofldwave}
 \mathrm{Tr}\cos \left( t \sqrt{\Delta - \frac{1}{4}} \right) & = &
 -\frac{\mathrm{Vol}(M)}{8 \pi} \frac{\cosh (t/2)}{\sinh^2(t/2)} +  \sum_{k =1}^{\infty} \sum_{\gamma}\frac{\ell(\gamma)}{4\sinh(k \ell(\gamma)/2)}(\delta(|t|-k \ell(\gamma))).
\end{eqnarray*}
Note that this is not a tempered distribution. Therefore, we may not pair either side with a general Schwartz functions. One can however still apply it to the function 
$h(x) = e^{-t x^2}$ and obtain
\begin{gather*} \label{stf}
 \mathrm{tr}(\mathrm{e}^{\Delta t}) = \frac{\mathrm{Vol}(M)\mathrm{e}^{-\frac{t}{4}}}{4 \pi t} \int_0^\infty \frac{\pi \mathrm{e}^{-r^2 t}}{\cosh^2(\pi r)} dr +
 \sum_{n=1}^\infty \sum_{\gamma}  \frac{\mathrm{e}^{-t/4}}{\sqrt{4 \pi t}} 
  \frac{\ell(\gamma) \mathrm{e}^{-\frac{n^2 \ell(\gamma)^2}{4 t}}}{2 \sinh\frac{n \ell(\gamma)}{2}}.
\end{gather*}
Note that the second term is of order $O(e^{-\frac{\ell_0^2}{4t}})$ as $t \to 0^+$, where $\ell_0$ is the length of the shortest closed geodesic (the systole length).
The first term can therefore be thought of as a much more refined version of the heat asymptotics.
\begin{exercise} 
 Derive the heat asymptotics from the first term in Selberg's trace formula by asymptotic analysis. Derive the first three heat coefficients.
\end{exercise}

The formula
$$
  \zeta_{\Delta}(s) = \frac{1}{\Gamma(s)} \int_0^\infty t^{s-1} \left(  \mathrm{tr}(\mathrm{e}^{\Delta t}) -1 \right) dt
$$
can now directly be used with the Selberg trace formula. In order to perform the analytic continuation, one can again split the integral
into integrals over $(0,1]$ and over $(1,\infty)$. For numerical purposes it is however convenient to instead split into $(0,\epsilon]$ and $(\epsilon,\infty)$ for a suitably chosen $\epsilon>0$.
This means
$$
   \zeta_{\Delta}(s) = \frac{1}{\Gamma(s)} \int_0^\epsilon t^{s-1} \left(  \mathrm{tr}(\mathrm{e}^{\Delta t}) -1 \right) dt + \frac{1}{\Gamma(s)} \int_\epsilon^\infty t^{s-1} \left(  \mathrm{tr}(\mathrm{e}^{\Delta t}) -1 \right) dt.
$$
We now compute the first term from the Selberg trace formula and the second term from the spectrum. Using the same unique continuation process as described earlier,
one obtains the following representation of the spectral zeta function for $\Re(s)> -N$:
$$
  \zeta_{\Delta}(s) =  \frac{1}{\mathrm{\Gamma}(s)}( T_1^{\epsilon}(s)+ T_2^{\epsilon,N}(s) + 
  T_3^{\epsilon,N}(s) + T_4^{\epsilon,N}(s)),
$$
where
\begin{gather*}
  T_1^\epsilon(s) =  \sum_{i=1}^\infty \lambda_i^{-s} \mathrm{\Gamma}(s,\epsilon \lambda_i),\\
  T_2^{\epsilon,N}(s) = \sum_{k=0}^N \frac{a_k \epsilon^{s+k-1}}{s+k-1},\\
  T_3^{\epsilon,N}(s) = \frac{\mathrm{Vol}(M)}{4 \pi} \int_0^\infty  I_N^\epsilon(r) dr,\\ 
  T_4^{\epsilon,N}(s) = \sum_{n=1}^\infty \sum_{\gamma} \int_0^\epsilon t^{s-1} \frac{\mathrm{e}^{-t/4}}{\sqrt{4 \pi t}} 
  \frac{\ell(\gamma) \mathrm{e}^{-\frac{n^2 \ell(\gamma)^2}{4 t}}}{2 \sinh\frac{n \ell(\gamma)}{2}} dt.
\end{gather*}
Here
$$
  I_N^\epsilon(r) = \int_0^\epsilon t^{s-2}  \left( \mathrm{e}^{-(r^2+\frac{1}{4}) t } - 
 \sum_{k=0}^N \frac{(-1)^k}{k!} (r^2+\frac{1}{4})^k t^k \right) dt,
$$
and the coefficients $a_k$ are the heat coefficients of the expansion of  $\mathrm{tr}(\mathrm{e}^{\Delta t}) -1$, which are given by
$$
 a_k = \frac{\mathrm{Vol}(M)}{4 \pi} \int_0^\infty \frac{(-1)^k}{k!} \frac{\pi (r^2+1/4)^k}{\cosh^2(\pi r)} dr - \delta_{1,k}.
$$
As usual $\mathrm{\Gamma}(x,y)$ denotes the incomplete Gamma function
$$
 \mathrm{\Gamma}(x,y) = \int_y^\infty t^{x-1} \mathrm{e}^{-t} dt.
$$
Differentiation gives the following formula for the spectral determinant.
$$
 -\log {\det}_{\zeta} \Delta = \zeta'_\Delta(0) = L_1^\epsilon + L _2^\epsilon + L_3^\epsilon
$$
where
\begin{gather*}
 L_1^\epsilon = \sum_{i=1}^\infty \mathrm{\Gamma}(0,\epsilon \lambda_i),\\
 L_2^\epsilon = - \frac{\mathrm{Vol}(M)}{4 \pi \epsilon} -\left(\frac{\mathrm{Vol}(M)}{12 \pi} + 1\right)(\gamma + \log(\epsilon)) 
  +  \frac{\mathrm{Vol}(M)}{4} \times \\ \int_0^\infty 
 \text{sech}^2(\pi  r) \left( \frac{1-\mathrm{E}_2 \left(\epsilon (r^2+\frac{1}{4})\right)}{\epsilon}+(r^2+\frac{1}{4})
  \left(\gamma-1+\log(\epsilon(r^2+1/4)) \right) \right) dr,\\
  L_3^\epsilon =  \sum_{n=1}^\infty \sum_{\gamma} \int_0^\epsilon \mathrm{e}^{-t/4} \frac{\ell_i \mathrm{e}^{-\frac{n^2 \ell(\gamma)^2}{4 t}}}{4 \sqrt{\pi } t^{3/2} \sinh{\left(\frac{1}{2} n \ell(\gamma) \right)}} dt,
\end{gather*}
and $\mathrm{E}_2(x)$ is the generalized exponential integral which equals $x \;\mathrm{\Gamma}(-1,x)$.
All the integrals have analytic integrands and can be truncated with exponentially small error. They can therefore
be evaluated to high accuracy using numerical integration.

For fixed $s$ and $\epsilon>0$ not too small the sums over the eigenvalues converge very quickly and therefore 
$T_1^\epsilon(s)$ and $L_1^\epsilon$ can be computed accurately from the first eigenvalues only.

If $\epsilon$ is small compared to $\ell_0^2$ the terms $T_4^{\epsilon,N}(s)$ and $L_3^\epsilon$ are very small. The terms $L_1^\epsilon$ and $T_1^\epsilon(s)$ involve the spectrum
but the sums converge rapidly, so that only a finite proportion of the spectrum is needed to numerically approximate these values. A detailed error analysis of these terms is carried out in \cite{mroz2014explicit}.

In order to illustrate the idea behind this method, let us look at the function
$$
 R_N(t)=\sum_{j=0}^N e^{-\lambda_j t} - \frac{\mathrm{Vol}(M)\mathrm{e}^{-\frac{t}{4}}}{4 \pi t} \int_0^\infty \frac{\pi \mathrm{e}^{-r^2 t}}{\cosh^2(\pi r)} dr.
$$ 
By Selberg's trace formula we have
$$
  R_N(t)=-\sum_{j=N+1}^\infty e^{-\lambda_j t} +  \sum_{n=1}^\infty \sum_{\gamma}  \frac{\mathrm{e}^{-t/4}}{\sqrt{4 \pi t}} 
  \frac{\ell(\gamma) \mathrm{e}^{-\frac{n^2 \ell(\gamma)^2}{4 t}}}{2 \sinh\frac{n \ell(\gamma)}{2}}.
$$
The first term is negative and dominant when $t$ is small. The second term is positive and dominates when $t$ is large.
Figure \ref{heat-comparison} shows this function for the Bolza surface. Here the first $500$ eigenvalues were computed numerically using the method outlined in the previous paragraphs.
The integral in the zero term of the Selberg trace formula is computed numerically.
\begin{figure}[htp] 
\centering
\includegraphics*[width=10cm]{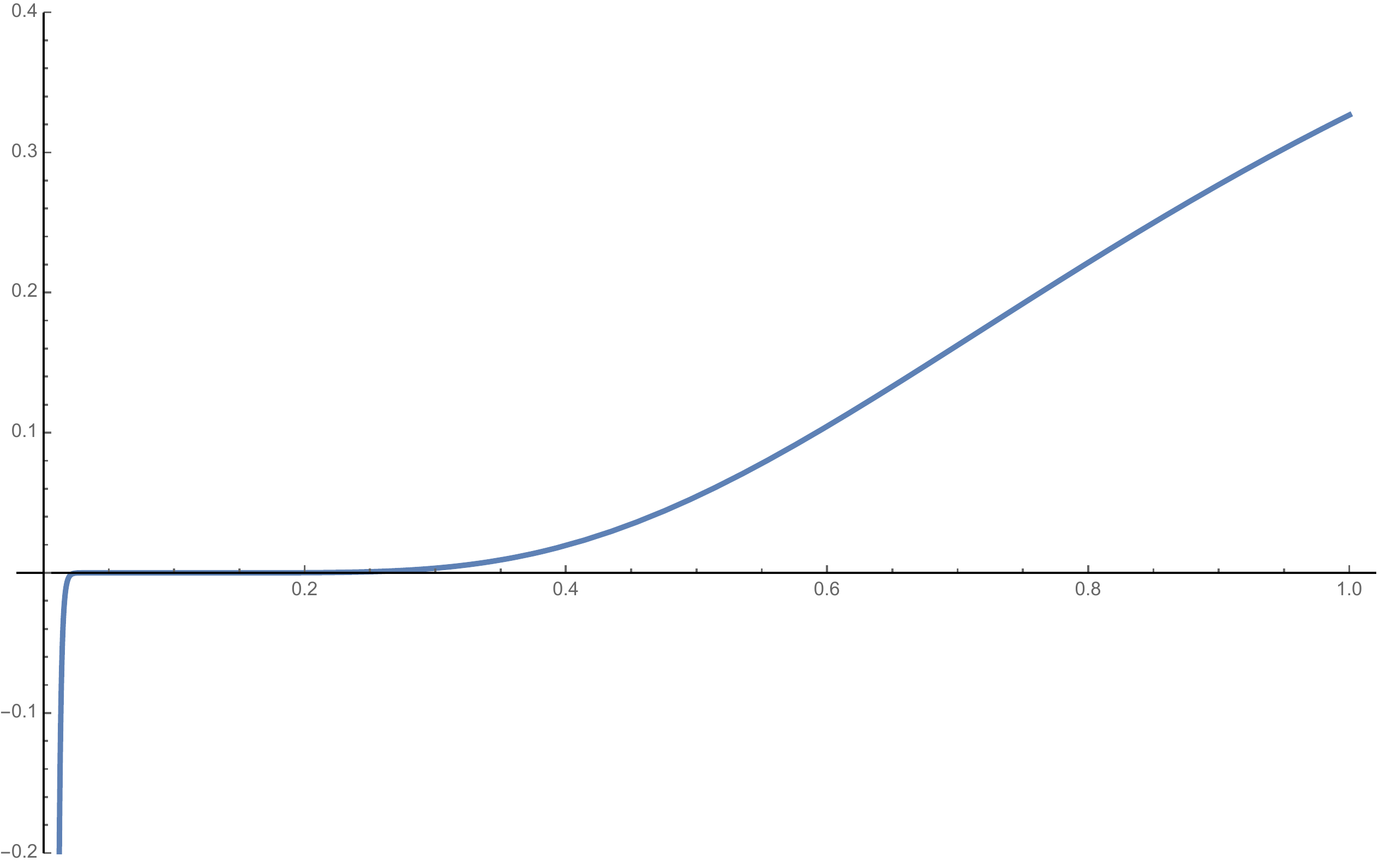}
\caption{The function $R_N$ for the Bolza surface with $N=500$}\label{heat-comparison}
\end{figure} 

One can now clearly see the regions in which each term dominates. There is a clearly visible region between $t=0.05$ and $t=0.2$ where the function is very small.
In fact its value at $t=0.1$ is of order smaller than $10^{-9}$.

In order to compute the spectral zeta function one can therefore choose $\epsilon=0.1$ and estimate the errors of the contributions of $T_4$ and $L_3$,
as well as the error from cutting off the spectrum and considering only the first $500$ eigenvalues.
One obtains for example for the Bolza surface
\begin{gather*}
 \mathrm{det}_\zeta(\Delta) \approx 4.72273,\\
 \zeta_\Delta(-1/2) \approx -0.650006.
\end{gather*}
To compute the first $500$ eigenvalues of the Bolza surface to a precision of $12$ digits, about $10000$
$\lambda$-evaluations of generalized singular value decomposition were needed. This took about
$10$ minutes on a $2.5$ GHz Intel Core i5 quad core processor (where parallelization was used).

\begin{figure}[htp]
\centering \label{zetaplot}
\includegraphics*[width=11cm]{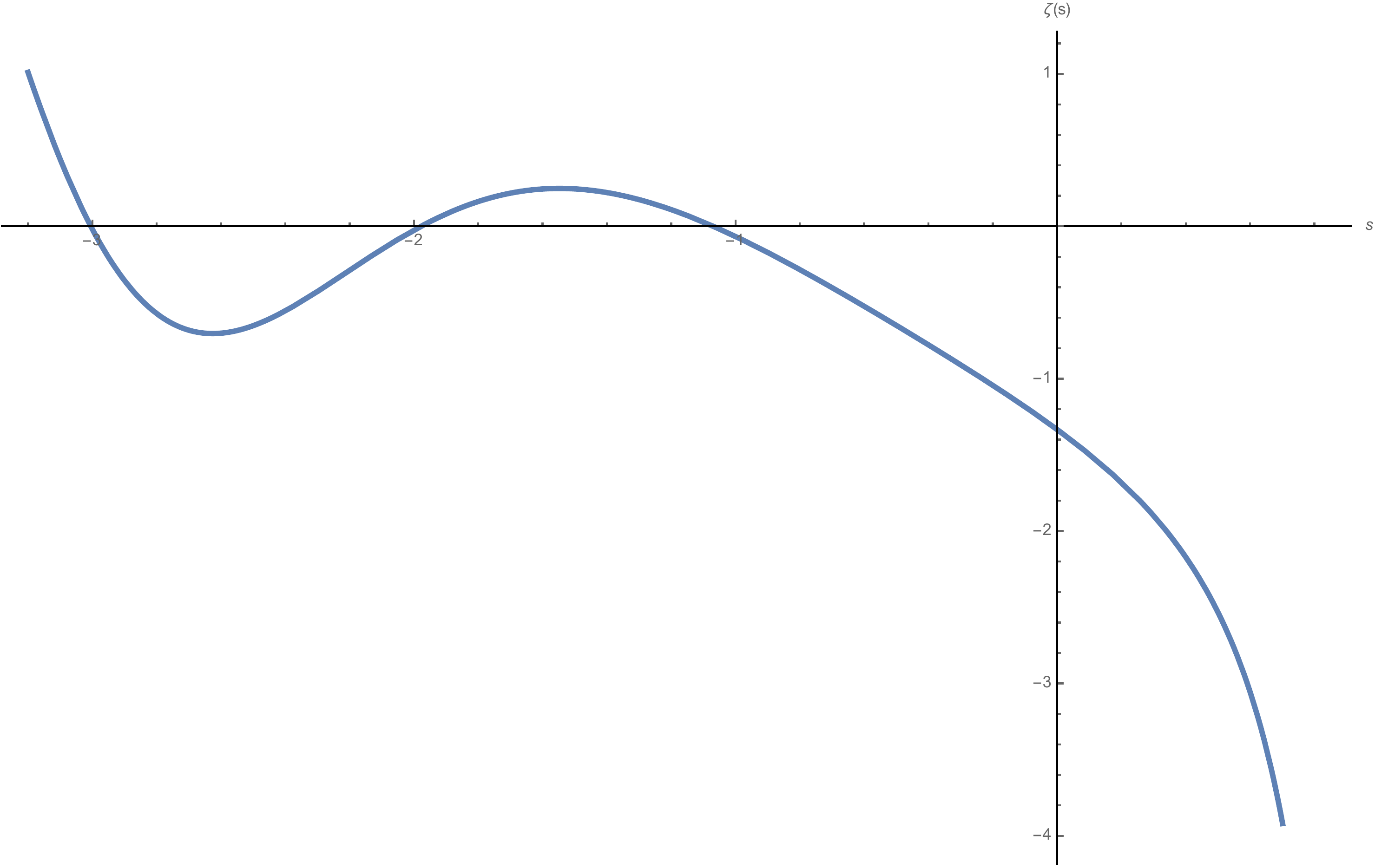}
\caption{$\zeta_\Delta(s)$ as a function of $s$ for the Bolza surface}
\end{figure} 

Numerical evidence suggests that the spectral determinant is maximized in genus $2$ for the Bolza surface. One can see quite clearly from perturbing in Teichm\"uller space that the Bolza surface 
is indeed a local maximum for the spectral determinant. Note that the Bolza surface is known to be a critical point by symmetry considerations.

\section{Completeness of a Set of Eigenvalues}

The method of particular solution on oriented hyperbolic surfaces is able to produce quite quickly a list of eigenvalues. Once such a list is computed and error bounds are established, one would like to check that this list is complete and one has not missed an eigenvalue, for example because the step-size in the search algorithm was chosen too small, or an eigenvalue had a higher multiplicity. In \cite{strohmaier2013algorithm} it was proved that
the step size can always be chosen small enough so that no eigenvalues are missed. Choosing the step-size according to these bounds does however slow down the speed of computation significantly.
In this section we discuss two methods by which completeness of a set of eigenvalues can be checked.

\subsection{Using the heat kernel and Selberg's trace formula}

Suppose that $\{ \mu_0, \ldots, \mu_N \}$ is a list of computed eigenvalues. We would like to use this list and check that there are no additional eigenvalues in an interval $[0,\lambda]$,
where $\lambda$ is possibly smaller than $\mu_N$. 
As before consider the function
$$
 R_N(t)=\sum_{j=0}^N e^{-\lambda_j t} - \frac{\mathrm{Vol}(M)\mathrm{e}^{-\frac{t}{4}}}{4 \pi t} \int_0^\infty \frac{\pi \mathrm{e}^{-r^2 t}}{\cosh^2(\pi r)} dr,
$$ 
and recall that
$$
  R_N(t)= -\sum_{j=N+1}^\infty e^{-\lambda_j t} +  \sum_{n=1}^\infty \sum_{\gamma}  \frac{\mathrm{e}^{-t/4}}{\sqrt{4 \pi t}} 
  \frac{\ell(\gamma) \mathrm{e}^{-\frac{n^2 \ell(\gamma)^2}{4 t}}}{2 \sinh\frac{n \ell(\gamma)}{2}}.
$$
For $t<T<\sqrt{\ell_0^2+1}-1$ the second term is bounded by 
\[
F_T(t)=\sqrt{\frac{T}{t}} \mathrm{tr}(e^{-\Delta T}) e^{\frac{T}{4}+\frac{l^2}{4 T}} e^{\frac{-l^2}{4 t}},
\]
In  \cite{mroz2014explicit} Fourier Tauberian theorems were used to establish the bound
$$
F_T(t) \! \leq \! \frac{\mathrm{Vol}(M)}{4 \pi} \frac{1}{\sqrt{t}} e^{\frac{T}{4}+\frac{\ell_0^2}{4 T}-\frac{\ell_0^2}{4 t}} \! \left( \frac{1}{ \sqrt{T}} +\frac{2 \nu^2+\nu \pi}{ \sqrt{\pi} \ell_0 } +\sqrt{T}\left(\frac{4\nu^3+2 \nu^2 \pi}{ \pi \ell_0^2} \right) \right),
$$
where
$\nu \approx 4.73$ is the first non-zero solution to the equation $\cos(\lambda) \cosh(\lambda)=1$, and $\ell_0$ is the systole length.
Hence,
$$
 R_N(t) \leq  F_T(t).
$$
Therefore, if we compute
$$
 \tilde R_N(t)=\sum_{j=0}^N e^{-\mu_j t} - \frac{\mathrm{Vol}(M)\mathrm{e}^{-\frac{t}{4}}}{4 \pi t} \int_0^\infty \frac{\pi \mathrm{e}^{-r^2 t}}{\cosh^2(\pi r)} dr
$$ 
and
$$
 F_T(t) - \tilde R_N \leq \epsilon,
$$
then there can not be any additional eigenvalues in the interval $[0,-\frac{\log{F_T(t) - \tilde R_N }}{t}]$ as otherwise we would have
$$
 R_N(t) > F_T(t).
$$

For the Bolza surface we have $\ell_0 \approx 3.05714$ and we can choose for instance $T=2$. 
\begin{figure}[htp]
\centering 
\includegraphics*[width=13cm]{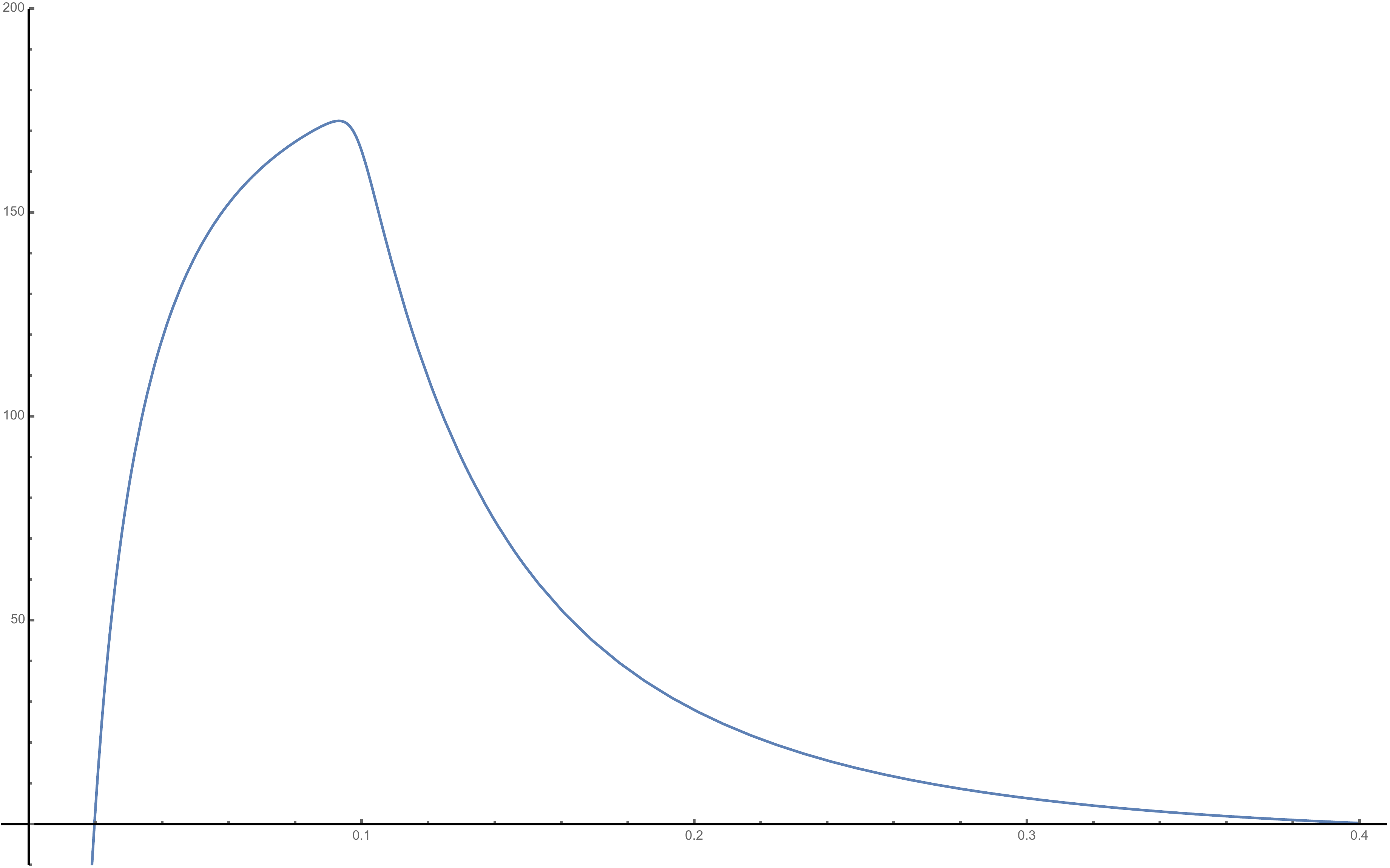}
\caption{$-\frac{\log{F_T(t) - \tilde R_N }}{t}$ as a function of $t$ for the Bolza surface, $N=200$} \label{missedevplot}
\end{figure} 

Using the list of the first $200$ eigenvalues one can see from Fig. \ref{missedevplot} that choosing $t$ near $0.1$ maximizes the function  $-\frac{\log{F_T(t) - \tilde R_N }}{t}$.
For $t=0.095$ one gets that there are no additional eigenvalues smaller than $172$. Note that $\lambda_{200} \approx 200.787$. So we had to compute roughly $30$ more eigenvalues to make sure our list is complete.
This method in principle can be made rigorous by using interval arithmetics. Its disadvantage is that for larger lists it requires the low lying eigenvalues to be known with very high accuracy.

\subsection{Using the Riesz mean of the counting function}

It is sometimes convenient to reparametrize in terms of square roots of eigenvalues. 
Let us define the local counting function $$\tilde N(t) = N(t^2) = \#\{\lambda_j \leq t^2 \} = \#\{\sqrt{\lambda_j} \leq t \}.$$
For a general negatively curved two dimensional compact Riemannian manifold one has (see \cite{berard1977wave})
$$
 \tilde N(t) \sim \frac{\mathrm{Vol}(M)}{4 \pi} t^2 + O(\frac{t}{\log(t)}),
$$
as $t \to \infty$. Because of the growing error term this is unsuitable to detect missed eigenvalues from the spectrum.
However, the so-called Riesz means of the counting functions are known to have improved asymptotic expansions. 
In our case define the first Riesz mean as 
$$
 (R_1  \tilde N) (t) := \frac{1}{t} \int_0^t \tilde N(r) dr. 
$$
Then for two dimensional compact surfaces of negative curvature one has
$$
 (R_1  \tilde N) (t) = \frac{\mathrm{Vol}(M)}{12 \pi} t^2 + \frac{1}{12 \pi} \int \kappa(x) dx + O(\frac{1}{\log(t)^2}),
$$
where $\kappa(x)$ is the scalar curvature at the point $x \in M$. This can be inferred in the case of constant curvature hyperbolic surfaces from Selberg's trace formula (see \cite{hejhal1976selberg}),
but also can be shown to hold true  in the case of negative variable curvature (\cite{mroz2014riesz}).
In the case of hyperbolic surfaces one obtains
$$
 (R_1  \tilde N) (t) = \frac{\mathrm{Vol}(M)}{12 \pi} \left(t^2 - 1 \right) + O(\frac{1}{\log(t)^2}).
$$

The strategy is to compute the Riesz means from a set of computed eigenvalues. That is, if $\{\mu_0,\ldots, \mu_N\}$ is a set of eigenvalues
we compute the function
$$
 \tilde N_{test}(t) := \#\{\sqrt{\mu_j} \leq t \}
$$
and plot
$$
 F_{test(t)}:=(R_1 \tilde N_{test})(t) -  \frac{\mathrm{Vol}(M)}{12 \pi} \left(  t^2 - 1 \right).
$$
This is done in Fig. \ref{missed-ev-plot2} for the Bolza surface. The red line was computed with an eigenvalue missing. One can clearly see this in the plot, and this also allows one to say roughly where the missing eigenvalue was.
If an eigenvalue is missing somewhere this will result in the function not going to zero. In this way one can even detect roughly where the missed eigenvalue is located and how many eigenvalues may be missing.

\begin{figure}[htp]
\centering
\includegraphics*[width=13cm]{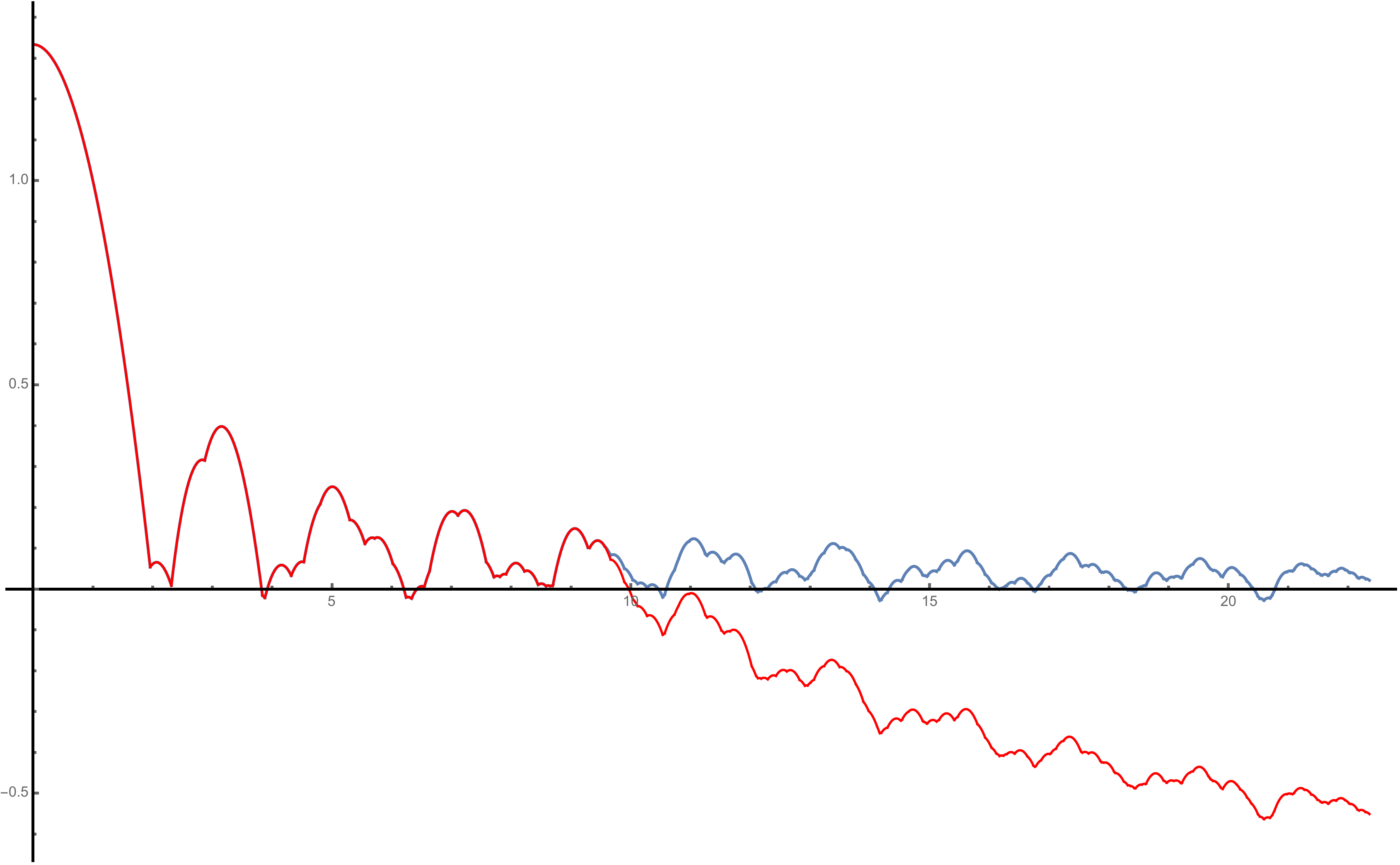}
\caption{$F_{test(t)}$ as a function of $t$ for the Bolza surface, the red line is the function with $\lambda_{89} \approx (9.563)^2$ missing} \label{missed-ev-plot2}
\end{figure} 

\noindent{\bf Acknowledgements.} 

I would like to thank the organizers  of the summer school for the perfect organization and the hospitality.
I am also grateful to Joseph Cook for carefully reading these notes and for providing some numerical work on the Bolza surface as well as diagrams.

\newpage

%\bibliographystyle{plain}
%\bibliography{lit.bib}

\end{document}